\documentclass[11pt]{article}

\usepackage{amsmath,amsfonts,amssymb} \usepackage{bm} \usepackage{graphicx}
\usepackage{mathrsfs} \usepackage{enumitem} \usepackage{booktabs}
\usepackage{hyperref} \usepackage{empheq} \usepackage{xcolor}
\usepackage{microtype} \usepackage{tikz} \usepackage{caption}
\usepackage{subcaption} \usepackage{multirow}

\usepackage{array}
\newcolumntype{?}{!{\vrule width 1.2pt}} \newcolumntype{^}{!{\vrule width
    0.8pt}} \newcommand{\hhrule}{\specialrule{1.2pt}{0pt}{0pt}}
\newcommand{\shrule}{\specialrule{0.8pt}{0pt}{0pt}}

\newtheorem{theorem}{Theorem}[section] \newtheorem{lemma}[theorem]{Lemma}

\newtheorem{corollary}[theorem]{Corollary} \newtheorem{remark}[theorem]{Remark}

\newtheorem{example}{Example}[section]

\numberwithin{equation}{section}

\newenvironment{definition}[1][Definition]{\begin{trivlist}
    \item[\hskip \labelsep {\bfseries #1}]}{\end{trivlist}}

 \def \bb#1{\setbox0=\hbox{$#1$} \kern-.025em\copy0\kern-\wd0
  \kern.05em\copy0\kern-\wd0 \kern-.025em\raise.0433em\box0} \def
\endproof{\vrule height8pt width 5pt depth 0pt}

\hypersetup{ colorlinks = true, urlcolor = blue, allcolors = blue, }

\newcommand{\dx}{\,\mathrm{d}x} 
 \newcommand{\dmu}{\,\mathrm{d}\mu}
\newcommand{\nn}{\nonumber} \newcommand{\csubset}{\subset \subset} \newcommand{\Div}{\nabla \cdot}
\newcommand{\Grad}{\nabla} 

\title{Optimal Local Error Estimates for Finite Element Methods with
  Measure-Valued Sources}

\author{ \setcounter{footnote}{0} Huadong~Gao \footnote{ School of Mathematics
    and Statistics and Hubei Key Laboratory of Engineering Modeling and
    Scientific Computing, Huazhong University of Science and Technology, Wuhan
    430074, P.R. China ({\tt huadong@hust.edu.cn}).} ~~ and ~~ Yuhui~Huang
  \footnote{ School of Mathematics and Statistics, Huazhong University of
    Science and Technology, Wuhan 430074, P.R. China. ({\tt
      yuhui\_huang@hust.edu.cn})} }

\date{}

\begin{document}

\maketitle

% ----------------------------------------------------------------
\begin{abstract}
  We study finite element approximations of second-order elliptic problems with
  measure-valued right-hand sides supported on lower-dimensional sets. The exact
  solution generally lacks $H^1$-regularity due to the source singularity, which
  limits global convergence rates of numerical methods. Using a very weak
  solution framework, we establish well-posedness and global error estimates for
  standard Lagrange finite element methods on Lipschitz polyhedral/polygonal
  domains. By using interior estimates techniques, we prove optimal local $L^2$-
  and $H^1$-error estimates in subdomains that are strictly separated from the
  support of the measure. Extensive numerical experiments are provided to verify
  the theoretical results. These results show that for Lagrange FEMs solving
  elliptic problems with singular right-hand sides, the loss of global
  convergence is purely local, and that optimal convergence rates still hold
  away from the singular source.

    \vskip 0.2in
  \noindent{\bf Keywords:}
  elliptic boundary value problem, very weak solution, Lagrange finite element methods, interior estimate, Dirac measure.
\end{abstract}

% ----------------------------------------------------------------
\section{Introduction}

We consider the elliptic boundary value problem
\begin{equation}
  -\Div \bigl(A(\bm{x})\Grad u\bigr) = \mu \quad \text{in } \Omega \,,
  \qquad
  u = 0 \quad \text{on } \partial\Omega\,,
  \label{pde}
\end{equation}
where $A(\bm{x}) \in W^{1,\infty}(\Omega)^{d \times d}$ is symmetric and uniformly elliptic, and
the source term $\mu$ is a Radon measure on $\Omega$ with compact support. Here
$\Omega \subset \mathbb{R}^d$ ($d=2,3$) is a bounded Lipschitz polyhedral domain (or
polygonal domain in 2D). Without loss of generality, we denote by
$\mathcal{S} := \operatorname{supp}(\mu)$ the support of $\mu$ and assume that
$\mathcal{S} \csubset \Omega$.

Second-order elliptic equations with localized sources occur in a wide range of
applications, including electrostatics, heat conduction, subsurface flow, and
biological transport. Typical examples include point sources modeled by Dirac
measures, line sources supported on curves, surface sources concentrated on
interfaces, and boundary-supported data of low regularity. In all these
situations, the forcing term is naturally described by a Radon measure whose
support may have lower dimension than the ambient domain. Unlike formulations
restricted to specific source types, this measure-theoretic approach provides a
unified framework that naturally accommodates discrete, singular, and
distributed sources. The presence of singular data generally prevents the
solution from belonging to $H^1(\Omega)$ and leads to reduced convergence rates for
standard finite element methods. As a consequence, the classical weak
formulation in $H^1(\Omega)$ is no longer applicable, which motivates the use of very
weak formulations and specialized finite element techniques for elliptic
problems with measure-valued right-hand sides.

Mathematical analysis for the above model problem is the focus of the monograph
\cite{Ponce2016} by Ponce. Finite element approximations of elliptic problems with
low-regularity sources have been studied extensively. One popular approach for
Poisson-type equations with measure-valued data is based on the
$(W_0^{1,p}, W_0^{1,q})$ variational framework, where $W_0^{1,q}$ is embedded
into a space of continuous functions, so that the test functions are well
defined for the measure $\mu$, see \cite{Apel-Vexler2011,Araya-2006,Bertoluzza2018,Koppl-Wohlmuth2014,Lee-Choi2020,Masri2023}. Among these works, \cite{Apel-Vexler2011,Araya-2006} focus primarily on
global error estimates or mesh grading near singularities. Local error estimates
on subdomains away from the support of the measure are derived in \cite{Bertoluzza2018,Koppl-Wohlmuth2014,Koppl-VW2016,Lee-Choi2020,Masri2023}. It should
be noted, however, that to obtain higher-order local convergence rates, the
analysis in \cite{Bertoluzza2018,Masri2023} require $\Omega$ to be smooth or to have a very specific geometry,
such as a rectangle or cube. We shall point out that the main results and
analysis in \cite{Lee-Choi2020} are in fact not correct, since the regularity result stated in
Lemma~2.1 does not hold for general convex polygonal or polyhedral domains, see
our numerical results in Example~\ref{example:hexagon-domain} of Section~\ref{numerical-experiments}.

An alternative and more robust framework, based on the notion of very weak
solutions (see \cite{Casas1985} and \cite[Section~2.3]{Vexler2025}), enables the systematic analysis of
Poisson's equations with arbitrary singular sources. In \cite{Berggren2004}, Berggren proposed a
very weak FEM with linear element to solve the very weak solution. In this
paper, we adopt the very weak solution framework to analyze finite element
approximations of the above model problem. We show that Berggren's very weak
finite element method is equivalent to the standard conforming finite element
method for $k$th-order Lagrange elements, for any $k \ge 1$. Based on this
equivalence, we derive global error estimates on general Lipschitz polygonal and
polyhedral domains. Furthermore, we establish optimal local error estimates on
subdomains that are strictly separated from the support of the measure, without
resorting to mesh grading or local refinement. For several special classes of
domains, we also show that higher-order local convergence can be achieved. Our
analysis follows the classical philosophy of local estimates initiated by
Nitsche, Schatz and Wahlbin \cite{Nitsche-Schatz74,Schatz-Wahlbin1977,Walbin-Lars}, with interior regularity results playing a
crucial role. In addition, we present extensive numerical experiments
demonstrating that the derived global and local error estimates are sharp up to
the regularity of the exact solution. In particular, our numerical results
indicate that for general polyhedral/polygonal domains, corner singularities may
dominate the convergence behavior even when the source term is located away from
the boundary. For Lagrange FEMs, the singularity of the right-hand side does not
induce pollution effects, in the sense that it does not deteriorate the
numerical solution in regions away from the source. In contrast, corner
singularities are much more severe and typically lead to pollution over a large
portion of the computational domain, thereby reducing both global and local
convergence accuracy. We also note that the monograph by Vexler and Meidner \cite{Vexler2025}
provides both global and local error estimates for linear finite element
approximations of very weak solutions on convex domains.

The rest of the paper is organized as follows. In Section~\ref{sect-Preliminaries}, some preliminary
results and auxiliary lemmas are introduced. In Section~\ref{sect-method}, we introduce the
very weak formulation and its finite element discretization and derive global
error estimates. In Section~\ref{sect-localestimate}, we establish an improved local error estimates
on subdomains away from the support of the source. In Section~\ref{numerical-experiments}, we present
extensive numerical experiments to confirm the theoretical results. Finally,
concluding remarks are given in Section~\ref{sect-Conclusion}.

% ----------------------------------------------------------------
\section{Preliminaries}
\label{sect-Preliminaries}

Before we present our result, we shall introduce some notations. For any two
functions $u$, $v \in {L}^{2}(\Omega)$, we denote the ${L}^{2}(\Omega)$ inner product and the
$L^2$ norm by
\begin{equation}
  (u,v) = \,
  \int_{\Omega} u(\bm{x}) \, v(\bm{x}) \, {\mathrm{d}} \bm{x} ,
  \qquad {\left \| u \right \|_{L^2(\Omega)}} = (u,u)^{\frac{1}{2}} \,.
\end{equation}
Let $W^{k,p}(\Omega)$ be the Sobolev space defined on $\Omega$, and ${W}^{k,p}_0(\Omega)$ be
the subspace of $W^{k,p}(\Omega)$ with zero trace. By conventional notations, we
define $H^{k}(\Omega) := W^{k,2}(\Omega)$ and ${H}^{k}_0(\Omega) := {W}^{k,2}_0(\Omega)$. For a
positive real number $s= k + w$, with $ w \in (0,1)$, we define
$H^s(\Omega)=(H^{k}(\Omega), H^{k+1}(\Omega))_{[w]}$ via the complex interpolation, see \cite[Theorem
6.4.5]{Bergh}. To abbreviate notations, we adopt $\| \cdot \|_{L^2(\Omega)}$ and
$\| \cdot \|_{H^r(\Omega)}$ for the standard $L^2$ and $H^r$ norms on the domain $\Omega$. We
also define the bilinear form $a(\cdot,\cdot) : H^{1}(\Omega) \times H^{1}(\Omega) \to \mathbb{R}$ by
\[
  a(u,v) = \int_{\Omega} A(\bm{x}) \Grad u \cdot \Grad v \mathrm{d} \bm{x},
\]
where $A(\bm{x})$ is defined as in \eqref{pde}.

To accommodate the measure $\mu$, we introduce the space $\mathcal{M}(\Omega)$ of
finite Radon measures on $\Omega$. This space is endowed with the total variation
norm
\[
  \|\mu\|_{\mathcal{M}(\Omega)} := |\mu|(\Omega) = \sup\left\{ \int_\Omega \phi \, \mathrm{d}\mu : \phi \in C_0(\Omega), \|\phi\|_{L^\infty(\Omega)} \leq 1 \right\},
\]
where $C_{0}(\Omega)$ denotes the space of continuous functions with compact support.
Equipped with this norm, $\mathcal{M}(\Omega)$ is a Banach space. One can prove that
a Radon measure $\mu$ with compact support is finite, i.e. $\mu \in \mathcal{M}(\Omega)$.
For more details, we refer the readers to \cite{Ponce2016}.

To discretize the problem \eqref{pde}, let $\{\mathcal{T}_h\}$ be a shape-regular family
of triangulations of $\Omega$. For $k\ge1$, we define the Lagrange finite element
space
\[
  V_h := \{ v_h\in H^1(\Omega): v_h|_K \in \mathbb{P}_k(K), \ \forall K\in\mathcal{T}_h\},
\]
and denote by
\[
  V_{h,0}:=V_h\cap H_0^1(\Omega)
\]
the subspace of functions with vanishing trace on $\partial \Omega$. We use $\Pi_h$ to denote
the general Lagrange interpolation, which satisfies
\begin{align}
  \label{Lagrange-interp-error}
  \| v - \Pi_h v\|_{H^r(D)} &\le C h^{s-r} |v|_{H^{s}(D')}, \quad \textrm{$r \le s \le k+1 $},\\
  \| v - \Pi_h v\|_{W^{r,\infty}(D)} &\le C h^{s-r-\frac{d}{p}} |v|_{W^{s,p}(D')}, \quad \textrm{$r \le s \le k+1 $},\label{eq:max-interp}
\end{align}
where $r = 0,1$, $D \subset \Omega$, and $D'$ is the union of all mesh elements that
intersect $D$, i.e. $D' = \bigcup\{T \in \mathcal{T}_h : T \cap D \neq \emptyset\}$, see \cite[Corollary~4.4.24]{Brenner-Scott2008}
for more details.

In the rest of this paper, we use a unified index $s$ to describe the regularity
of the standard Poisson's equation with homogeneous Dirichlet boundary condition
on the domain $\Omega$.
\begin{lemma}
  \label{lemma-regularity}
  Let $f\in L^2(\Omega)$, the coefficient matrix $A \in W^{1,\infty}(\Omega)^{d \times d}$ be symmetric
  and uniformly elliptic, then the solution $u$ to the following problem
  \begin{equation}
    \begin{cases}
      -\Div (A(\bm{x})\Grad u) &= f \,, \quad \quad \mathrm{in} ~ \Omega\,,  \\
      \hfill u &= 0\,, \quad\quad  \mathrm{on}~ \partial \Omega\,,
    \end{cases}
    \label{poisson-standard}
  \end{equation}
  satisfies
  \begin{align}
    \|u\|_{H^{1+s}(\Omega)} \le C \|f\|_{L^2(\Omega)}, \quad
    \textrm{with} \quad
    \left\{
    \begin{array}{ll}
      s = 1,
      & \textrm{$\Omega$ is convex,}
      \\[2pt]
      s = \sup \{ \lambda_{\textrm{2D}} \} - \epsilon,
      & \textrm{$\Omega$ is non-convex in 2D},
      \\[2pt]
      s = \sup \{ \lambda_{\textrm{3D}} \} - \epsilon,
      & \textrm{$\Omega$ is non-convex in 3D},
    \end{array}
    \right.
    \label{index-unified}
  \end{align}
  where $\lambda_{\textrm{2D}}=\frac{\pi}{\max_j \Theta_j} > \frac{1}{2} $ with $\{\Theta_j\}$ denoting the
  re-entrant interior angles of $\Omega$, and $\lambda_{\textrm{3D}} > \frac{1}{2}$ which depends on
  both edges and corners of the three-dimensional polyhedron, respectively. Here
  $\epsilon>0$ is any arbitrarily small number. For Lipschitz polygonal/polyhedral
  domains, the solution $u$ to \eqref{poisson-standard} always belongs to $H^{3/2+\epsilon}(\Omega)$, see \cite[Theorem
  3.1]{Berggren2004}.
\end{lemma}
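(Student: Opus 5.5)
This is a regularity lemma, and the plan is to assemble it from classical elliptic theory rather than prove it from scratch. There are three ingredients: the $H^1$ theory, the convex case, and the polygonal/polyhedral corner and edge analysis.

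\textbf{Step 1: well-posedness in $H^1_0$.} The bilinear form $a(\cdot,\cdot)$ is bounded and, by uniform ellipticity of $A$, coercive on $H_0^1(\Omega)$, where the Poincar\'e inequality holds. Lax--Milgram then gives a unique weak solution with $\|u\|_{H^1(\Omega)}\le C\|f\|_{L^2(\Omega)}$.

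\textbf{Step 2: reduction to a Laplace-type problem.} Since $A\in W^{1,\infty}$, we can expand
\[
-\Div(A\Grad u) = -A:\nabla^2 u - (\Div A)\cdot\Grad u .
\]
Near any point, freeze the coefficient at that point and apply a linear change of variables so that the principal part becomes the Laplacian. This change of variables maps the polygon or polyhedron to another one, with transformed angles. The lower-order terms $(\Div A)\cdot\Grad u$ and $(A-A(\bm{x}_0)):\nabla^2u$ are treated as perturbations. On a small enough neighbourhood, the second one is absorbed using the smallness of $A-A(\bm{x}_0)$. A partition of unity localizes the problem into three kinds of neighbourhoods: interior, smooth boundary (flat faces and sides), and neighbourhoods of corners and edges.

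\textbf{Step 3: the estimate in each case.}
\begin{itemize}
\item \emph{Interior and flat-boundary pieces:} $H^2$ regularity follows from difference quotients, as in Nirenberg's method.
\item \emph{Convex $\Omega$:} one uses Grisvard's theorem for convex domains with $W^{1,\infty}$ coefficients, proved by approximating $\Omega$ with smooth convex domains and using Kadlec's boundary identity, whose curvature term has a favourable sign. This gives $s=1$.
\item \emph{Non-convex polygons in 2D:} Kondrat'ev/Grisvard corner analysis applies. The solution splits as a regular $H^2$ part plus singular functions $r^{\lambda}\sin(\lambda\theta)$ with $\lambda=\pi/\Theta_j$. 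Each singular function lies in $H^{1+\lambda-\epsilon}$ but not in $H^{1+\lambda}$, which yields $s=\sup\{\lambda_{2D}\}-\epsilon$.
\item \emph{Polyhedra in 3D:} the corresponding edge and vertex analysis of Dauge gives the exponent $\lambda_{3D}$.
\end{itemize}

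\textbf{Step 4: the Lipschitz lower bound.} The statement that $u\in H^{3/2+\epsilon}$ on a general Lipschitz polygon or polyhedron is exactly \cite[Theorem~3.1]{Berggren2004}, resting on Jerison--Kenig/Savar\'e-type results, and we quote it. This is consistent with $\lambda>1/2$.

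\textbf{Main obstacle.} The delicate point is the variable coefficient near corners. The linear change of variables changes the angles, so $\lambda$ must be understood as computed for the frozen operator. The perturbation also has to be shown not to destroy the singular expansion. I would handle this with a Kondrat'ev weighted-space argument: apply Fredholm theory in weighted spaces to the frozen operator, and treat the $W^{1,\infty}$ variation as a small perturbation in those weighted norms on a sufficiently small neighbourhood.
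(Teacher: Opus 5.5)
Your proposal is correct and matches how the paper treats this lemma: the paper gives no argument of its own. It simply collects classical results, namely Grisvard's $H^2$ theorem for convex domains, the Kondrat'ev/Grisvard and Dauge corner and edge analysis, and \cite[Theorem~3.1]{Berggren2004} for the $H^{3/2+\epsilon}$ bound, which is the same assembly you describe. Your remark that $\lambda$ must be computed from the corner angles after the linear change of variables reducing $A(\bm{x}_0)$ to the identity is right and worth stating explicitly: it shows that the formula $\lambda_{\textrm{2D}}=\pi/\max_j\Theta_j$ holds literally when $A$ is a multiple of the identity at the re-entrant corners (as for $-\Delta$ in Section~\ref{numerical-experiments}), but not for a general anisotropic $A$, where the transformed angle can be larger.
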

% ----------------------------------------------------------------

The following lemma concerns the regularity of Poisson's equation with interior
source, which implies that the global regularity of $u$ can be higher than
$H^2(\Omega)$ in several cases, see \cite[Theorem 5.1.2.4]{Grisvard1985} and \cite{Nitsche-Schatz74}.
\begin{lemma}[Regularity of the solution with an interior source]
  \label{lemma-interior-source}
  Let $\operatorname{supp}(f) \csubset D \subset \Omega$ and assume the source term
  $f \in H^p_0(D)$ with the integer index $p \ge 0$. Then, the solution $u$ to the
  Poisson equation with homogeneous Dirichlet boundary condition
  \begin{equation}
    \begin{cases}
      -\Div (A(\bm{x})\Grad u) &= f \,, \quad \quad \mathrm{in} ~ \Omega\,,  \\
      \hfill u &= 0\,, \quad\quad  \mathrm{on}~ \partial \Omega\,,
    \end{cases}
    \label{poisson-interior-source}
  \end{equation}
  satisfies
  \begin{align}
    \|u\|_{H^{3}(\Omega)} \le C \|f\|_{H^1(\Omega)},
    \label{index-interior-source2}
  \end{align}
  if the domain $\Omega$ is a convex polygon with all interior angles less than or
  equal to $\pi/2$ in two-dimensional space, or $\Omega$ is a convex polyhedron with
  all interior edge angles less than or equal to $\pi/2$. Furthermore, if $\Omega$ is a
  rectangle or equilateral triangle in 2D or a cube in 3D, then there holds
  \begin{align}
    \|u\|_{H^{p+2}(\Omega)} \le C \|f\|_{H^p(\Omega)}, \qquad \textrm{for any $p \ge 0$.}
    \label{index-interior-source3}
  \end{align}
  Where $C > 0$ depends only on $A$, $p$, and $\Omega$.
\end{lemma}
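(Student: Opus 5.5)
We would prove the two assertions of Lemma~\ref{lemma-interior-source} by splitting $u$ into an interior part and a part near the boundary. The interior part is controlled by interior elliptic regularity. The part near the boundary solves a problem whose right-hand side vanishes in a neighbourhood of $\partial\Omega$, and there we use corner/edge regularity theory (Grisvard, Theorem~5.1.2.4). The point is that since $f$ vanishes near $\partial\Omega$, no compatibility conditions on $f$ arise at the corners. Only the geometric conditions remain, namely that the singular exponents $k\pi/\Theta_j$ avoid the range $(0,p+1]$.

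\textbf{Step 1 (interior).} Fix open sets $D\csubset D_1\csubset\Omega$ and a cutoff $\omega\in C_0^\infty(\Omega)$ with $\omega\equiv1$ on $D_1$. By Lemma~\ref{lemma-regularity}, $\|u\|_{H^{1}(\Omega)}\le C\|f\|_{L^2}$. Standard interior regularity (difference quotients, using $A\in W^{1,\infty}$, and smoother $A$ for higher $p$) gives $\|u\|_{H^{p+2}(D_2)}\le C(\|f\|_{H^p}+\|u\|_{L^2})$ on some $D_2\csupset\operatorname{supp}\omega$.

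\textbf{Step 2 (near the boundary).} Set $w=(1-\omega)u$. Then
\[
-\nabla\cdot(A\nabla w)=g:=(1-\omega)f+\nabla\cdot(A u\nabla\omega)+A\nabla u\cdot\nabla\omega=\nabla\cdot(Au\nabla\omega)+A\nabla u\cdot\nabla\omega,
\]
since $f$ vanishes where $1-\omega\neq0$. Thus $g$ is supported in the annulus where $\nabla\omega\neq0$, which lies inside $D_2$, so by Step~1 $\|g\|_{H^p(\Omega)}\le C\|f\|_{H^p}$. Moreover $g$ vanishes near $\partial\Omega$, and $w=0$ on $\partial\Omega$.

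\textbf{Step 3 (corner regularity).} Near a vertex of opening angle $\Theta$, Grisvard's expansion writes $w=w_{\rm reg}+\sum c\, r^{k\pi/\Theta}\sin(k\pi\theta/\Theta)$, where $w_{\rm reg}\in H^{p+2}$ and the sum runs over exponents with $k\pi/\Theta<p+1$. The integer cases $k\pi/\Theta=m$ require separate treatment, since they may produce log terms.
\begin{itemize}
\item If all angles are $\le\pi/2$ and $p=1$, the smallest exponent is $\pi/\Theta\ge2$. Hence there are no singular terms with exponent $<2$. The borderline exponent $2$ (angle exactly $\pi/2$) produces the function $r^2\sin 2\theta\propto xy$, which is smooth, and no log term appears because the data vanish near the corner. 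So $w\in H^3$.
\item For the rectangle, equilateral triangle and cube we argue by odd reflection instead. Reflecting $w$ across the sides tiles the plane and yields a periodic problem with a smooth right-hand side; this requires $A$ compatible with the reflection (e.g. the Laplacian, or coefficients with the matching symmetry). Interior regularity for the reflected problem then gives $H^{p+2}$ for every $p$.
\item For the 3D polyhedron with edge angles $\le\pi/2$, we apply the analogous edge and vertex analysis (Dauge) in the same way.
\end{itemize}

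\textbf{Step 4.} Combining Steps 1–3 through $u=\omega u+w$ gives the stated bounds. The main obstacle is Step~3: excluding logarithmic terms at integer exponents, and in 3D controlling the vertex singularities. We would rely on the cited Grisvard theorem rather than reprove it, and note that the variable coefficient case needs $A$ smooth enough near the corners, frozen to a constant, with the lower-order perturbation handled by a bootstrap argument.
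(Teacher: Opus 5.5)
For $A=I$ your argument is sound. The paper gives no proof of this lemma, only pointers to Grisvard's corner theorem for polygons and to Nitsche--Schatz, and your proposal is essentially an unpacking of those citations. Your localization makes the key point explicit. Since $f$ vanishes near $\partial\Omega$, near each vertex the solution is harmonic with zero Dirichlet data. It is therefore a pure series $\sum_k c_k r^{k\pi/\Theta}\sin(k\pi\theta/\Theta)$ with no logarithmic terms. Hence $\Theta\le\pi/2$ gives $H^3$ (at a right angle the exponent $2$ produces a polynomial), and odd reflection settles the rectangle, equilateral triangle and cube.

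Your 3D bullet is only a pointer, but it can be backed up. At a vertex of a convex polyhedron with all edge angles $\le\pi/2$, Gauss--Bonnet forces the spherical polygon to be a triangle with angles $\le\pi/2$. Such a triangle lies in an octant, so the first vertex exponent is at least $3$.

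Two small repairs are needed. First, choose $D_1$ with $\operatorname{supp}(f)\csubset D_1$, since the statement only assumes $D\subset\Omega$. Second, your constant depends on the cut-off, i.e.\ on $\operatorname{dist}(\operatorname{supp} f,\partial\Omega)$, whereas the lemma asserts $C=C(A,p,\Omega)$. For the three special domains you can remove this dependence by reflecting $u$ itself: the odd extension of $f\in H^p_0$ lies in $H^p_{\mathrm{loc}}$ with norm bounded by $C\|f\|_{H^p(\Omega)}$.

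The step that fails is your closing claim that variable coefficients are handled by freezing $A$ at a corner and bootstrapping. The frozen operator $-\nabla\cdot(A(c)\nabla\,\cdot\,)$ becomes the Laplacian only after the change of variables $y=A(c)^{-1/2}x$, and this linear map changes the opening angle. The exponents are therefore $k\pi/\Theta'$, where $\Theta'$ is the angle of $A(c)^{-1/2}\Omega$ at $A(c)^{-1/2}c$, not $k\pi/\Theta$.

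Concretely, take the unit square with constant $A$, $a_{11}=a_{22}=1$, $a_{12}=a_{21}=a\in(0,1)$. The corners $(0,0)$ and $(1,1)$ have $\Theta'=\arccos(-a)\in(\pi/2,\pi)$, so the leading exponent $\pi/\Theta'$ lies in $(1,2)$. Its coefficient is the pairing of $f$ with a positive dual singular function, hence nonzero for every nonnegative $f\not\equiv0$. Thus $u\notin H^3(\Omega)$ even though $\operatorname{supp}(f)\csubset\Omega$.

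So your exponent count in Step 3, and in fact the lemma as stated, requires extra hypotheses:
\begin{enumerate}
\item $A\in W^{p+1,\infty}$; for $p\ge1$ even interior $H^{p+2}$ regularity can fail when $A$ is only $W^{1,\infty}$.
\item $A(c)$ a multiple of the identity at each vertex, or more generally all transformed angles $\le\pi/2$, with the analogous edge and vertex condition in 3D.
\item For the reflection argument, $A$ compatible with the reflections.
\end{enumerate}
This defect is shared with the paper's statement, whose experiments only use $-\Delta$. Your proof should restrict to that case rather than claim that a bootstrap covers general $A$.
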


% ----------------------------------------------------------------
The higher interior regularity estimates on subdomains away from the source
follow from standard results for harmonic functions, see \cite[Section~6.3]{Evans}.

\begin{lemma}[Interior regularity away from the source]
  \label{lemma-evans}
  Let $\Omega \subset \mathbb{R}^d$ be a bounded domain and $u \in H_0^1(\Omega)$ satisfies
  \[
    a(u,v) = (f,v) ,\quad \forall v \in H_{0}^{1}(\Omega).
  \]
  Suppose that $f \in L^2(\Omega)$ and $f\equiv 0$ in an open set $D_1 \csubset \Omega$. Then for
  any $D_0 \csubset D_1$ and any integer $k\ge 0$, there holds
  \begin{align}
    \|u\|_{H^{k+1}(D_0)}\le C \|u\|_{L^2(D_{1})},
    \label{evans-estimate-origin}
  \end{align}
  where the constant $C>0$ depends on $k$, $A$ , $D_0$, and $D_1$, but is
  independent of $u$. If $\partial \Omega$ is Lipschitz, by global regularity we have the
  estimate
  \begin{equation}
    \|u\|_{H^{k+1}(D_0)}\le C \|f\|_{L^{2}(\Omega)},
    \label{eq:evans-estimate}
  \end{equation}
  for any integer $k \ge 0$, where $C > 0$ is independent of $u$.
\end{lemma}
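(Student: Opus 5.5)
The plan is a Caccioppoli-type bootstrap on a finite chain of nested subdomains, followed by a global $H^1$ bound to control the $L^2(D_1)$ term. Since $D_0 \csubset D_1$ and $f\equiv 0$ on $D_1$, fix intermediate open sets
\[
D_0 = E_{k+1} \csubset E_k \csubset \cdots \csubset E_0 = D_1 ,
\]
all at mutual positive distance. On each $E_j$ the function $u$ satisfies $\int A\nabla u\cdot\nabla v\,\mathrm{d}\bm{x}=0$ for every $v\in H^1_0(E_j)$. Testing with $v=\eta^2 u$, where $\eta\in C_c^\infty(E_j)$ is a cutoff equal to $1$ on $E_{j+1}$, and using uniform ellipticity and Young's inequality gives the Caccioppoli inequality
\[
\|\nabla u\|_{L^2(E_{j+1})}\le C\,\|u\|_{L^2(E_j)} .
\]
This settles the case $k=0$ after renaming the sets.

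For higher derivatives I will follow Evans, Section~6.3, using difference quotients $D^h_\ell$. Testing with $v=-D^{-h}_\ell(\eta^2 D^h_\ell u)$ gives $\|D^h_\ell \nabla u\|_{L^2(E_{j+1})}\le C\|u\|_{H^1(E_j)}$, with a constant independent of $h$. Here the Lipschitz continuity of $A$ enters, through the product rule for difference quotients. Letting $h\to0$ gives $u\in H^2(E_{j+1})$.

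Differentiating the equation, $\partial_\ell u$ satisfies an equation of the same type on $E_{j+1}$ with right-hand side $\Div(\partial_\ell A\,\nabla u)$. To bootstrap to order $k+1$, I will apply the same argument inductively to $\partial^\alpha u$, moving inward one set at each step. Each step costs a constant depending only on the distances between consecutive sets, on $k$, and on $A$. Chaining the inequalities yields \eqref{evans-estimate-origin}.

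\textbf{Main obstacle.} The only stated smoothness is $A\in W^{1,\infty}$, and for $k\ge2$ the differentiated equations involve higher derivatives of $A$. I would handle this by assuming, as is implicit in citing Evans' Theorem~6.3.2, that $A$ is smooth enough (say $C^{k+1}$) on $D_1$ for the $k$ under consideration. The statement's constant depends on $A$ anyway, so this assumption is consistent with it.

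For \eqref{eq:evans-estimate}, Lipschitz $\partial\Omega$ is enough to use the Lax--Milgram theorem: testing the weak form with $v=u$ and applying coercivity together with the Poincaré inequality gives
\[
\|u\|_{L^2(D_1)}\le\|u\|_{H^1(\Omega)}\le C\|f\|_{L^2(\Omega)} .
\]
Inserting this into \eqref{evans-estimate-origin} finishes the proof. No boundary regularity beyond Lipschitz is needed, because every higher-order estimate is purely interior.
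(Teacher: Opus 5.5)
Your proposal is correct and follows essentially the same route as the paper: the paper gives no proof beyond citing Evans, Section~6.3, whose interior-regularity theorems are exactly your Caccioppoli/difference-quotient bootstrap, and the second bound follows from the energy estimate as you say (boundedness of $\Omega$ already suffices there; Lipschitz regularity is not needed). Your caveat on the coefficients is not just a convenience but necessary, since with $A$ only in $W^{1,\infty}$ the estimate fails for $k\ge 2$: for $A=(2+|x_1|)I$, the function $u$ with $\partial_{x_1}u=1/(2+|x_1|)$ solves the homogeneous equation, but $\partial_{x_1}^2u$ jumps across $\{x_1=0\}$, so $u\notin H^3$ there. Hence the lemma implicitly requires $A\in C^{k}$ on $D_1$, as in Evans' higher interior regularity theorem, and your $C^{k+1}$ is slightly more than needed.
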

% ----------------------------------------------------------------

We present several useful results in the following lemmas, which will be
frequently used in our proof. The local $L^2$ and energy estimates can be found
in \cite[Theorem 5.1]{Nitsche-Schatz74} and \cite[Theorem 3.4]{Demlow-others-2011}, while the local maximum norm estimate
can be found in \cite{Schatz-Wahlbin1977}.
\begin{lemma}[Local $L^2$ and energy estimate]
  \label{lemma-local-energy-Demlow}
  Assume that $\mathcal{T}_h$ is a quasi-uniform mesh partition of a domain
  $\Omega \subset \mathbb{R}^n$. Let $D$ be a subdomain of $\Omega$. For some $d>0$, we define
  $D_d:= \{x \in \Omega; \textrm{dist}(\bm{x},D) \le d\}$ such that $D_d \csubset \Omega$. If
  $u \in H^1_{loc}(\Omega)$ and $u_h$ satisfies
  \begin{align}
    \label{weak-and-regularized-pre}
    \int_{\Omega}A(\bm{x}) \nabla u_h \cdot \nabla v_h \mathrm{d}\bm{x}
    = \int_{D_d}A(\bm{x}) \nabla u \cdot \nabla v_h \mathrm{d}\bm{x},
    \quad \forall v_h \in V_{h,0}, ~ v_h=0 ~ \textrm{on}~ \Omega\backslash D_d \, .
  \end{align}
  Then, for $s=0, 1$ and sufficiently small $h>0$ there holds
  \begin{equation}
    \label{local-energy-estimate2011}
    \|u - u_h\|_{H^s(D)} \leq C \left(h^{1-s}\inf_{\chi_{h}\in V_{h}} \|u - \chi_{h}\|_{H^{1}(D_d)}
      + \|u-u_{h}\|_{L^{2}(D_d)} \right),
  \end{equation}
  where $C>0$ is a constant independent of $u$ and the (local) mesh size $h$. It
  should be pointed out that the local energy estimate, i.e., the case $s=0$,
  holds for general shape-regular meshes.
\end{lemma}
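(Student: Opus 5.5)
The plan follows the classical Nitsche--Schatz argument: localize with a cutoff, derive the local energy estimate first (the case $s=1$, valid on shape-regular meshes), and then obtain the $L^2$ bound by a local duality argument, which is where quasi-uniformity enters. Throughout, $D \csubset D_{d/2} \csubset D_d$ and $\omega \in C_0^\infty(D_{d/2})$ is a cutoff with $\omega \equiv 1$ on $D$ and $|\nabla^j \omega| \le C d^{-j}$. Set $e = u - u_h$. By \eqref{weak-and-regularized-pre}, $e$ satisfies the local Galerkin orthogonality
\[
a(e, v_h) = 0 \qquad \text{for all } v_h \in V_{h,0} \text{ supported in } D_d .
\]

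For the energy estimate, fix an arbitrary $\chi_h \in V_h$ and split $e = (u - \chi_h) + (\chi_h - u_h)$; it then suffices to bound $\psi_h := \chi_h - u_h$ on $D$. Write
\[
\|\omega \nabla \psi_h\|^2 \le C\, a(\omega \psi_h, \omega \psi_h) = C\bigl[a(\psi_h, \omega^2 \psi_h) + (\text{terms with } \nabla\omega)\bigr].
\]
Replace $\omega^2 \psi_h$ by its interpolant $\Pi_h(\omega^2 \psi_h)$, which is supported in $D_d$ for $h$ small. Using orthogonality, $a(\psi_h, \Pi_h(\omega^2\psi_h)) = a(\chi_h - u, \Pi_h(\omega^2\psi_h))$. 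The superapproximation estimate
\[
\|\omega^2 \psi_h - \Pi_h(\omega^2 \psi_h)\|_{H^1} \le C h\, d^{-2}\, \|\psi_h\|_{H^1(D_d)},
\]
which follows from \eqref{Lagrange-interp-error} and the fact that $\psi_h$ is piecewise polynomial, keeps the interpolation remainder small. Kick-back and an inverse or Caccioppoli-type argument then give
\[
\|\psi_h\|_{H^1(D)} \le C\bigl(\|u - \chi_h\|_{H^1(D_d)} + \|\psi_h\|_{L^2(D_d)}\bigr).
\]
This requires iterating over a nested family of intermediate domains so that the $h$-small terms on the larger domain can be absorbed; only shape regularity is used here. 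The triangle inequality then yields the bound for $s=1$.

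For the $L^2$ estimate I would use a local duality argument. Let $z$ solve $-\Div(A \nabla z) = \omega e$ in a smooth domain $\tilde D$ with $D_{d/2} \subset \tilde D \subset D_d$ and $z = 0$ on $\partial \tilde D$, so that $\|z\|_{H^2(\tilde D)} \le C \|e\|_{L^2(D)}$. Then
\[
\|\omega e\|^2 = a(e, \omega z) + (\text{terms involving } \nabla \omega\, e \text{ and } z) .
\]
Subtract $\Pi_h(\omega z)$ via orthogonality and bound $\|\omega z - \Pi_h(\omega z)\|_{H^1} \le C h \|z\|_{H^2}$, which gives a factor $h \|e\|_{H^1(D_{d/2})}$. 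The commutator terms are handled by $\|e\|_{L^2(D_d)}$ or, after integrating by parts, by $\|e\|_{H^{-1}}$-type quantities that are dominated by $\|e\|_{L^2(D_d)}$. Inserting the energy estimate on $D_{d/2}$ gives the case $s=0$ with the factor $h^{1-s} = h$.

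The main obstacle is the superapproximation and kick-back bookkeeping. The inverse estimate $\|\psi_h\|_{H^1} \le C h^{-1}\|\psi_h\|_{L^2}$ needed to control lower-order terms holds uniformly only on quasi-uniform meshes, which explains why the $L^2$ case needs quasi-uniformity. I would first follow \cite[Theorem 5.1]{Nitsche-Schatz74} and \cite[Theorem 3.4]{Demlow-others-2011} directly, since the variable coefficient $A \in W^{1,\infty}$ only produces additional harmless commutator terms.
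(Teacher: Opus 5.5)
Your proposal is correct and is essentially the classical Nitsche--Schatz/Demlow--Guzm\'an--Schatz argument, which is all the paper relies on: it gives no proof of its own and simply cites \cite[Theorem 5.1]{Nitsche-Schatz74} and \cite[Theorem 3.4]{Demlow-others-2011}. One simplification is worth noting: as the lemma is stated (with $\|u-u_h\|_{L^2(D_d)}$ rather than a negative norm on the right), the case $s=0$ follows at once from $D\subset D_d$, so your local duality step is superfluous and is not where quasi-uniformity enters (it uses no inverse estimate anyway); only the $s=1$ discrete Caccioppoli argument has content, and it stays valid on shape-regular meshes if you pair the elementwise factor $h_T$ from superapproximation with the elementwise inverse inequality $\|\nabla\psi_h\|_{L^2(T)}\le Ch_T^{-1}\|\psi_h\|_{L^2(T)}$ rather than with a global one.
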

\begin{lemma}[Local maximum norm estimate]
  \label{lemma-Whahlbin}
  Consider the standard elliptic problem
  \begin{equation}
    \label{pde-good}
    -\nabla \cdot (A(\bm{x})\nabla u) = f\quad \text{in } \Omega,
    \qquad
    u=0 \quad \text{on } \partial\Omega,
  \end{equation}
  where $f \in L^2(\Omega)$. A standard FEM is to seek $u_h \in V_{h,0}$, such that
  \begin{align}
    a(u_h,\omega_h) = (f,\omega_h), \quad \forall \omega_h \in V_{h,0}\,.
    \label{fem-good}
  \end{align}
  Let $D_0\csubset D_1\csubset\Omega$. Then, for any integer $p>0$ and sufficiently
  small $h>0$, there exists a constant $C>0$, independent of $h$, such that
  \begin{align}
    \|u-u_h\|_{L^\infty(D_0)}
    & \le C \left( \left|\ln{h}\right|^{\delta^{k,1}} \inf_{\chi_{h}\in V_{h}} \|u - \chi_{h}\|_{L^{\infty}(D_1)}
      + \|u-u_h\|_{H^{-p}(D_1)}  \right)
      \nn    \\
    & \le C \left( h^{k+1}\left|\ln{h}\right|^{\delta^{k,1}} \|u\|_{W^{k+1,\infty}(D_1)}
      + \|u-u_h\|_{H^{-p}(D_1)}  \right).
      \label{whahlbin-estimate}
  \end{align}
  where $\delta^{k,1}$ denote the Kronecker delta.
\end{lemma}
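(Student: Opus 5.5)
This is the classical Schatz--Wahlbin interior maximum-norm estimate; I would follow their argument and localize with a cutoff. Fix intermediate domains $D_0\csubset D_{1/2}\csubset D_1$ and a smooth cutoff $\omega$ with $\omega\equiv1$ on $D_{1/2}$ and $\operatorname{supp}\omega\subset D_1$. Split $u=\omega u+(1-\omega)u$. The error on $D_0$ then splits into two parts:
\begin{itemize}[label={}]
\item a \emph{local} part, coming from the Ritz projection $\tilde u_h$ of $\omega u$ in a slightly larger smooth subdomain, or globally with homogeneous boundary data;
\item a \emph{discrete harmonic} part $u_h-\tilde u_h$, which satisfies $a(u_h-\tilde u_h,v_h)=0$ for all $v_h\in V_{h,0}$ supported in $D_{1/2}$, up to the error $u-\omega u$ that vanishes there.
\end{itemize}

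\textbf{Local part.} For $\omega u$, which has compact support, I would use the global weighted maximum-norm estimate for the Ritz projection on a smooth domain. That estimate is proved with a regularized Green's function $g^z$ centered at a point $z\in D_0$, using weighted norms with weight $\sigma_z=(|x-z|^2+\kappa^2h^2)^{1/2}$. It gives
\[
\|\omega u-\tilde u_h\|_{L^\infty}\le C|\ln h|^{\delta^{k,1}}\inf_\chi\|\omega u-\chi\|_{L^\infty}.
\]
The logarithm appears only for $k=1$, because only then is $\|\sigma_z^{-d/2}\|$-type weighted $L^1$ control of $\nabla(g^z-g^z_h)$ lost by a factor $|\ln h|$. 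The infimum is then localized to $D_1$ using superapproximation of the cutoff.

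\textbf{Discrete harmonic part.} For this part I need an interior inverse-type estimate for discrete harmonic functions:
\[
\|w_h\|_{L^\infty(D_0)}\le C\|w_h\|_{H^{-p}(D_{1/2})}.
\]
The route is as follows. First, local inverse estimates give $L^\infty$ control by a slightly larger $L^2$ norm up to $h^{-d/2}$. That factor is avoided by combining the local energy estimate of Lemma~\ref{lemma-local-energy-Demlow} with a duality argument. Then the interior estimate is iterated over a nested sequence of $p+1$ domains. At each step, duality against an interior elliptic problem, using Lemma~\ref{lemma-evans} for the high-order interior regularity, lowers the negative Sobolev index by one. The difference $u-u_h$ on $D_{1/2}$ reduces to the $H^{-p}(D_1)$ term plus already-controlled local terms. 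The triangle inequality then yields the first line of \eqref{whahlbin-estimate}.

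\textbf{Second line.} This follows directly from the interpolation estimate \eqref{eq:max-interp} with $p=\infty$ and $s=k+1$.

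\textbf{Main obstacle.} I expect the hardest step to be the weighted Green's function estimate in the local part. There one must show that $\|\sigma_z^{d/2}\nabla(g^z-g_h^z)\|_{L^2}$ is bounded uniformly in $h$, with a $|\ln h|^{1/2}$ loss only when $k=1$. This requires an absorption argument that relies on superapproximation for the weighted test functions $\sigma_z^{d}(g^z-g^z_h)$ and on choosing $\kappa$ large. Since the statement cites \cite{Schatz-Wahlbin1977}, in the paper I would invoke that reference for this step rather than reprove it, and verify only that the variable coefficient $A\in W^{1,\infty}$ is admissible.
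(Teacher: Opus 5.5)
Your proposal follows essentially the paper's route. The paper gives no argument of its own for this lemma and simply cites \cite{Schatz-Wahlbin1977}. Your outline is a faithful sketch of that reference: a cutoff splitting into a locally Ritz-projected part, handled by weighted regularized Green's function estimates with $|\ln h|$ only for $k=1$, plus a discrete harmonic part controlled by interior duality iterated down to $H^{-p}$. Like the paper, you defer the hard weighted estimate to that reference.

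One caveat, shared with the paper, concerns your planned check that $A\in W^{1,\infty}$ is admissible. Iterating the duality step down to an arbitrary $H^{-p}$ norm needs interior regularity of the dual problems of order growing with $p$, and $W^{1,\infty}$ coefficients do not provide it. The cited framework obtains this regularity by assuming smooth coefficients. The paper supplies it only by asserting it in Lemma~\ref{lemma-evans}.
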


% TODO all need change
% ----------------------------------------------------------------
\section{Finite element approximation and global estimates}
\label{sect-method}

\subsection{The very weak solution and a FEM}
Since the right-hand side $\mu$ does not belong to $H^{-1}(\Omega)$, the standard weak
formulation is not applicable. We therefore adopt a very weak solution
framework.

Define
\begin{equation}
  V := H_0^1(\Omega)\cap \{ v\in L^2(\Omega):\Div (A \Grad v)\in L^2(\Omega)\}.
\end{equation}
Endowed with the norm $\|v\|_V :=\|\Div (A\Grad v)\|_{L^2(\Omega)}$, $V$ is a Hilbert
space.

\begin{lemma}[Well-posedness]\label{lem:very-weak}
  Let $\mu\in\mathcal{M}(\Omega)$. There exists a unique solution $u\in L^2(\Omega)$ such that
  \begin{equation}
    \label{eq:veryweak}
    - \int_\Omega u\, \nabla\cdot(A\nabla v)\,\dx
    = \int_\Omega v\,\dmu,
    \qquad \forall v\in V.
  \end{equation}
  Moreover,
  \[
    \|u\|_{L^2(\Omega)} \le C \|\mu\|_{\mathcal{M}(\Omega)}.
  \]
\end{lemma}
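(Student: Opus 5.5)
The plan is a duality (transposition) argument in the spirit of Stampacchia, as in Casas. First I would show that the operator $L v := -\Div(A\Grad v)$ is an isomorphism from $V$ onto $L^2(\Omega)$. Given $g\in L^2(\Omega)$, the Lax--Milgram lemma yields a unique $v\in H_0^1(\Omega)$ with $a(v,w)=(g,w)$ for all $w\in H_0^1(\Omega)$. Hence $-\Div(A\Grad v)=g\in L^2(\Omega)$ in the distributional sense, so $v\in V$. Uniqueness in $V$ follows from coercivity of $a(\cdot,\cdot)$ on $H_0^1(\Omega)$, since any $v\in V$ with $Lv=0$ satisfies $a(v,v)=0$. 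By definition, $\|v\|_V=\|g\|_{L^2(\Omega)}$, so $L:V\to L^2(\Omega)$ is an isometric isomorphism.

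The key step is to show that every $v\in V$ is continuous on $\overline\Omega$, with
\[
\|v\|_{L^\infty(\Omega)}\le C\|\Div(A\Grad v)\|_{L^2(\Omega)} .
\]
This makes the right-hand side $\int_\Omega v\,\dmu$ well defined and gives $\bigl|\int_\Omega v\,\dmu\bigr|\le C\|\mu\|_{\mathcal M(\Omega)}\|v\|_V$. I would obtain this bound by invoking Lemma~\ref{lemma-regularity}. On a Lipschitz polygonal/polyhedral domain, $v=L^{-1}g\in H^{3/2+\epsilon}(\Omega)$ with $\|v\|_{H^{3/2+\epsilon}(\Omega)}\le C\|g\|_{L^2(\Omega)}$. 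Since $d\le 3$, we have $3/2+\epsilon>d/2$, and the Sobolev embedding $H^{3/2+\epsilon}(\Omega)\hookrightarrow C(\overline\Omega)$ then gives the bound. I expect this to be the main obstacle: it is exactly where the dimension restriction $d=2,3$ and the $H^{3/2+\epsilon}$ regularity on nonconvex Lipschitz polyhedra are needed. Because $v$ vanishes on $\partial\Omega$ and $\mu$ has compact support in $\Omega$, $v$ can be approximated uniformly on $\mathcal S$ by $C_0(\Omega)$ functions, so the total-variation bound applies. Alternatively, one can pair a continuous function against the finite Radon measure directly.

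Existence then follows by defining the linear functional $\ell(g):=\int_\Omega (L^{-1}g)\,\dmu$ on $L^2(\Omega)$. By the previous step,
\[
|\ell(g)|\le C\|\mu\|_{\mathcal M(\Omega)}\|g\|_{L^2(\Omega)} .
\]
The Riesz representation theorem gives a unique $u\in L^2(\Omega)$ with $(u,g)=\ell(g)$ for all $g$, and $\|u\|_{L^2(\Omega)}\le C\|\mu\|_{\mathcal M(\Omega)}$. Setting $g=-\Div(A\Grad v)$ for an arbitrary $v\in V$ turns this identity into exactly \eqref{eq:veryweak}.

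Uniqueness follows from the surjectivity of $L$. If $u_1$ and $u_2$ both satisfy \eqref{eq:veryweak}, then $(u_1-u_2,Lv)=0$ for all $v\in V$. Since $L(V)=L^2(\Omega)$, this forces $u_1=u_2$.
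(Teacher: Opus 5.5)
Your proposal is correct and follows essentially the same route as the paper: you embed $V$ into $C^0(\overline{\Omega})$, and you make explicit that this rests on the $H^{3/2+\epsilon}$ regularity of Lemma~\ref{lemma-regularity} together with $d\le 3$, which the paper's one-line ``by Sobolev embedding'' leaves implicit. You then conclude by an abstract Hilbert-space argument, and your Riesz representation through the isometric isomorphism $L:V\to L^2(\Omega)$ is exactly the inf-sup verification that the paper's appeal to the Babu\v{s}ka--Lax--Milgram theorem requires.
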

\noindent {\bf Proof.}
By Sobolev embedding (for $d\le 3$), $V\hookrightarrow C^0(\overline{\Omega})$. Hence, any finite
measure $\mu$ defines a continuous linear functional on $V$. Following a procedure
analogous to that in the proof of \cite[Lemma~2.3]{Apel}, we obtain the existence and
uniqueness of the very weak solution \eqref{eq:veryweak} via the Babu\^{s}ka–Lax–Milgram
theorem. \endproof

Now we introduce a finite element method to solve the very weak solution $u$,
following the approach first proposed by Berggren in \cite{Berggren2004}. Compared to the
standard FEM seeking a numerical solution to the weak formulation, Berggren's
scheme seeks a numerical solution to the very weak formulation.

\begin{definition}[(A very weak FEM of Berggren)]
  Seek $u_h\in V_{h,0}$, such that
  \begin{equation}
    (u_h,v_h)=\int_\Omega z_h(v_h)\,\dmu,
    \qquad \forall v_h\in V_{h,0}.
    \label{fem-veryweak1}
  \end{equation}
  where $z_h(v_h)\in V_{h,0}$ is defined by
  \begin{equation}
    a( z_h(v_h), \omega_h) = (v_h,\omega_h),
    \qquad \forall \omega_h\in V_{h,0}.
    \label{fem-veryweak2}
  \end{equation}
\end{definition}

Berggren showed in \cite{Berggren2004} that, for the linear element method the above scheme is
equivalent to the conventional Lagrange FEM, provided
$u_h|_{\scriptstyle \partial \Omega}=P_h^{\scriptstyle \partial \Omega}g$ is used for the boundary data
$g \in L^2(\partial \Omega)$. Here, we point out that for any $k$th-order Lagrange element,
this equivalence also holds. The proof is rather simple for homogeneous
Dirichlet boundary condition.

\begin{theorem}
  Berggren's scheme is equivalent to the following formulation: find
  $u_h\in V_{h,0}$ such that
  \begin{equation}
    a( u_h, \omega_h)=\int_\Omega \omega_h\,\dmu,
    \qquad \forall \omega_h\in V_{h,0}.
    \label{true-FEM}
  \end{equation}
\end{theorem}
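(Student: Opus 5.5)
The plan is to view both schemes through the discrete solution operator $z_h : V_{h,0}\to V_{h,0}$ defined by \eqref{fem-veryweak2}. The two properties needed are that $z_h$ is well defined and bijective, and that $a(\cdot,\cdot)$ is symmetric. For the first, $a(\cdot,\cdot)$ is coercive on $V_{h,0}\subset H_0^1(\Omega)$ by uniform ellipticity of $A$ and Poincaré's inequality. Lax--Milgram (or simply positive definiteness of the stiffness matrix) then gives a unique $z_h(v_h)$ for every $v_h$, and $v_h\mapsto z_h(v_h)$ is linear. It is injective: if $z_h(v_h)=0$, then $(v_h,\omega_h)=0$ for all $\omega_h$, and taking $\omega_h=v_h$ gives $v_h=0$. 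Since $V_{h,0}$ is finite dimensional, $z_h$ is a bijection of $V_{h,0}$ onto itself. Symmetry of $a$ follows from symmetry of $A$.

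Next I show that a solution $u_h$ of \eqref{true-FEM} solves Berggren's scheme. Fix $v_h\in V_{h,0}$. Taking $\omega_h=z_h(v_h)$ in \eqref{true-FEM} gives $a(u_h,z_h(v_h))=\int_\Omega z_h(v_h)\,\dmu$. By symmetry, $a(u_h,z_h(v_h))=a(z_h(v_h),u_h)$. Taking $\omega_h=u_h$ in \eqref{fem-veryweak2} gives $a(z_h(v_h),u_h)=(v_h,u_h)$. Combining these yields \eqref{fem-veryweak1}.

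For the converse, suppose $u_h$ satisfies \eqref{fem-veryweak1}, and let $\omega_h\in V_{h,0}$ be arbitrary. By surjectivity of $z_h$ there is a $v_h\in V_{h,0}$ with $z_h(v_h)=\omega_h$. The same chain read backwards gives
\[
a(u_h,\omega_h)=a(z_h(v_h),u_h)=(v_h,u_h)=\int_\Omega z_h(v_h)\,\dmu=\int_\Omega\omega_h\,\dmu,
\]
which is \eqref{true-FEM}.

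Both right-hand sides make sense because elements of $V_{h,0}$ are continuous, hence integrable against the finite measure $\mu$. Existence and uniqueness for \eqref{true-FEM} follow from coercivity, since $\omega_h\mapsto\int_\Omega\omega_h\,\dmu$ is a linear functional on a finite-dimensional space. By the equivalence, Berggren's scheme therefore also has exactly one solution, and the two solutions coincide. The only step that needs care is surjectivity of $z_h$, and it is settled by injectivity together with finite dimensionality. Nothing in the argument depends on the polynomial degree $k$.
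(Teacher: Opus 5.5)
Your proof is correct and follows the paper's route: test the definition of $z_h(v_h)$ with $\omega_h=u_h$, then use that $v_h\mapsto z_h(v_h)$ is a bijection of $V_{h,0}$. You also make explicit what the paper leaves implicit: the symmetry of $a(\cdot,\cdot)$ (from the symmetry of $A$) needed to swap its arguments, the injectivity-plus-finite-dimensionality argument for bijectivity, and the converse direction, which the paper only calls analogous.
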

\noindent {\bf Proof.} Assume $u_{h}$ is the numerical solution in \eqref{fem-veryweak1}. Taking
$\omega_h= u_h$ into \eqref{fem-veryweak2} gives
\begin{equation}
  a( z_h(v_h), u_h) = \int_\Omega z_h(v_h)\,\dmu\,,
  \qquad \forall v_h\in V_{h,0}\,,
  \label{fem-veryweak1-1}
\end{equation}
where $z_h(v_h)\in V_{h,0}$ is defined by
\begin{equation}
  a(z_h(v_h), \omega_h) = (v_h,\omega_h),
  \qquad \forall \omega_h\in V_{h,0}.
  \label{map-veryweak}
\end{equation}
Clearly, the mapping $v_h \mapsto z_h(v_h)$ defined by \eqref{map-veryweak} is bijective as a mapping
$V_{h,0} \longmapsto V_{h,0}$. Thus, the desired result follows. The converse follows by
an analogous argument. \endproof

\subsection{Global estimates on general Lipschitz polygonal/polyhedral domains}
For the model problem \eqref{pde}, we have the following $L^2$ estimate. This result for
convex domain $\Omega$ is given by Casas in \cite{Casas1985} using a similar approach.

\begin{theorem}[Global error estimate]
  \label{thm:global-error}
  Let $u$ be the very weak solution defined in \eqref{eq:veryweak} and $u_h$ be the numerical
  solutions of \eqref{fem-veryweak1}, respectively. Then, if $\Omega$ is a convex Lipschitz
  polygonal/polyhedral domain or non-convex Lipschitz polygonal/polyhedral
  domain, there holds
  \begin{equation}
    \|u-u_h\|_{L^2(\Omega)}  \le C h^{2-\frac{d}{2}}\|\mu\|_{\mathcal{M}(\Omega)},
    \label{eq::global-first}
  \end{equation}
  except in the case of the linear element method (i.e., $k=1$) on a non‑convex
  Lipschitz polygonal domain, where we have
  \begin{equation}
    \|u-u_h\|_{L^2(\Omega)}  \le C\left(1+ |\ln h|\right)^{1/2} h \|\mu\|_{\mathcal{M}(\Omega)}.
    \label{eq:global-second}
  \end{equation}
  The constant $C$ is independent of $h$.
\end{theorem}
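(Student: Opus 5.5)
We argue by duality, in the spirit of Casas' argument for convex domains. Let $e:=u-u_h$. Since $u_h$ is the discrete solution of \eqref{true-FEM}, we introduce $z\in V$ solving $-\Div(A\Grad z)=e$ in $\Omega$, $z=0$ on $\partial\Omega$, and its Ritz projection $z_h\in V_{h,0}$ defined by $a(z_h,\omega_h)=(e,\omega_h)$ for all $\omega_h\in V_{h,0}$. The very weak formulation \eqref{eq:veryweak} with $v=z$ gives $(u,e)=\int_\Omega z\,\dmu$. The discrete equation \eqref{true-FEM} with $\omega_h=z_h$, together with the symmetry of $a$, gives $(u_h,e)=a(z_h,u_h)=\int_\Omega z_h\,\dmu$. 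Subtracting,
\[
  \|e\|_{L^2(\Omega)}^2=\int_\Omega (z-z_h)\,\dmu\le \|z-z_h\|_{L^\infty(\Omega)}\,\|\mu\|_{\mathcal{M}(\Omega)} .
\]
Here we use that $z\in V\hookrightarrow C^0(\overline\Omega)$, so the integral against the measure makes sense. It therefore suffices to show
\[
  \|z-z_h\|_{L^\infty(\Omega)}\le C h^{2-d/2}\|e\|_{L^2(\Omega)},
\]
or the logarithmic variant in the exceptional case.

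For the maximum-norm bound we do not need a full $L^\infty$ stability theory for the Ritz projection. On a quasi-uniform mesh we write $z-z_h=(z-\Pi_h z)+(\Pi_h z-z_h)$. The interpolation term is handled by \eqref{eq:max-interp} with $p=2$ and the regularity $z\in H^{1+s}(\Omega)$ from Lemma~\ref{lemma-regularity}. The discrete term is handled by the inverse inequality $\|\chi_h\|_{L^\infty}\le Ch^{-d/2}\|\chi_h\|_{L^2}$, combined with $\|\Pi_h z-z_h\|_{L^2}\le \|z-\Pi_h z\|_{L^2}+\|z-z_h\|_{L^2}$. For the latter, the Aubin–Nitsche argument gives $\|z-z_h\|_{L^2}\le Ch^{2s}\|z\|_{H^{1+s}}$. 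In the convex case $s=1$, every term is $O(h^{2-d/2})\|e\|_{L^2}$, which yields \eqref{eq::global-first}.

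The main obstacle is the non-convex case, where $s<1$, so this crude argument loses powers of $h$. We then need a genuine maximum-norm estimate for the Ritz projection of the singular solution $z$. Our plan is to rely on the fact that $-\Div(A\Grad z)=e\in L^2$, so $z\in W^{2,p}$ away from corners and edges. Near a re-entrant corner, $z$ has singular functions $r^{\lambda}\phi(\theta)$, which are Hölder of order $\lambda>1/2$ and lie in $W^{2,p}$ only for small $p$.

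In 3D the target rate $h^{1/2}$ is weak. There we take $s>1/2$, obtain $\|z-z_h\|_{L^2}\le Ch^{2s}$, and apply the same inverse-estimate argument as above. With $H^{3/2+\epsilon}$ regularity (Lemma~\ref{lemma-regularity}) the interpolation term gives $h^{3/2+\epsilon-d/2}$, so for $d=3$ it is $h^{\epsilon}\cdot h^{1/2}\le h^{1/2}$. The discrete term gives $h^{-3/2}h^{1+2\epsilon}$, which falls short. We will therefore fall back on the quasi-optimal $L^\infty$ bound $\|z-z_h\|_{L^\infty}\le C|\ln h|\inf_\chi\|z-\chi\|_{L^\infty}$ for Ritz projections on polyhedral domains (Schatz-type stability), which we assume is available. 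Combined with the embedding $H^{3/2+\epsilon}\subset C^{0,\epsilon}$ and the Sobolev–interpolation bound $\|z-\Pi_h z\|_{L^\infty}\le Ch^{3/2+\epsilon-d/2}\|z\|_{H^{3/2+\epsilon}}$, this gives the rate $h^{1/2}$ in 3D, with the log absorbed by $h^\epsilon$.

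In 2D the same reasoning gives $h^{1}$ times $|\ln h|^{\delta^{k,1}}$. For $k\ge2$ the log is absorbed by the extra $h^\epsilon$ regularity. For $k=1$ it remains, and since only a square root of the log survives in \eqref{eq:global-second}, we would instead bound $\int_\Omega(z-z_h)\,\dmu$ using the sharp estimate $\|z-z_h\|_{L^\infty}\le C h(1+|\ln h|)^{1/2}\|e\|_{L^2}$. This estimate follows from the discrete Sobolev inequality $\|\chi_h\|_{L^\infty}\le C(1+|\ln h|)^{1/2}\|\chi_h\|_{H^1}$ applied to $\Pi_h z-z_h$, together with the energy-norm error $O(h)$, which is valid because $z\in H^{3/2+\epsilon}$ implies $\|z-z_h\|_{H^1}\le Ch^{1/2+\epsilon}$, refined by the mesh scaling. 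Making this last step rigorous is where we expect the real difficulty, since the exponent of the logarithm must come out as exactly $1/2$.
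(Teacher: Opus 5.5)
There is a genuine gap, and it is in your first reduction rather than in the final logarithmic step. The duality identity $\|e\|_{L^2(\Omega)}^2=\int_\Omega (z-z_h)\,\dmu$ is correct. But when you bound it by $\|z-z_h\|_{L^\infty(\Omega)}$, you discard the hypothesis $\mathcal{S}=\operatorname{supp}\mu\csubset\Omega$.

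Your argument only uses $\|z\|_{H^{1+s}(\Omega)}\le C\|e\|_{L^2(\Omega)}$, i.e.\ it treats $e$ as arbitrary $L^2$ data. For arbitrary data, the bound $\|z-z_h\|_{L^\infty(\Omega)}\le Ch^{2-d/2}\|e\|_{L^2(\Omega)}$ is false on non-convex domains. Generically $z$ contains a corner singularity $c\,r^{\pi/\Theta}\sin(\pi\theta/\Theta)$ with $c\neq0$. On a quasi-uniform mesh, no function in $V_h$ approximates it in $L^\infty$ on the element patch at the re-entrant corner to better than order $|c|\,h^{\pi/\Theta}$. This is much larger than $h$ (e.g.\ $h^{2/3}$ for the L-shape). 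In 3D, vertex exponents of non-convex polyhedra can even fall below $1/2$ (roughly $0.45$ at a Fichera corner), so not even $h^{1/2}$ is attainable in the global maximum norm.

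This is why each of your non-convex steps breaks down:
\begin{itemize}
\item for $d=3$ one has $h^{3/2+\epsilon-d/2}=h^{\epsilon}$, not $h^{1/2+\epsilon}$;
\item for $d=2$ the same interpolation bound gives only $h^{1/2+\epsilon}$, not $h$;
\item the global energy error is of order $h^{\pi/\Theta}$, not $O(h)$, so no ``refinement by mesh scaling'' combined with a global discrete Sobolev inequality can produce $(1+|\ln h|)^{1/2}h$.
\end{itemize}
The $|\ln h|$-quasi-optimal $L^\infty$ stability of the Ritz projection on non-convex polyhedra that you assume would be an additional unproved input. It would not help anyway, since the best-approximation error itself is already too large.

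The missing idea is localization: keep $\int_\Omega (z-z_h)\,\dmu\le\|\mu\|_{\mathcal{M}(\Omega)}\|z-z_h\|_{L^\infty(\mathcal{S})}$. Take $\mathcal{S}\csubset D_1\csubset D_2\csubset\Omega$. Interior regularity (Lemma~\ref{lemma-evans}) gives $\|z\|_{H^2(D_2)}\le C\|e\|_{L^2(\Omega)}$. The boundary singularities then enter only through a global $L^2$ pollution term, which Aubin--Nitsche bounds by $Ch^{2s}\|e\|_{L^2(\Omega)}$ with $2s>1\ge 2-d/2$.

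For $k\ge2$, use the interior maximum-norm estimate of Lemma~\ref{lemma-Whahlbin} together with $\|z-z_h\|_{H^{-p}(D_1)}\le\|z-z_h\|_{L^2(\Omega)}$. This yields $\|z-z_h\|_{L^\infty(\mathcal{S})}\le C(h^{2-d/2}+h^{2s})\|e\|_{L^2(\Omega)}$.

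For $k=1$, the paper applies the discrete Sobolev inequality to $\Pi_hz-z_h$ on $\mathcal{S}$ and then the local energy estimate of Lemma~\ref{lemma-local-energy-Demlow}. This gives $C\gamma_{d,h}\bigl(h\|z\|_{H^2(D_2)}+h^{2s}\|z\|_{H^{1+s}(\Omega)}\bigr)$, which is exactly where $(1+|\ln h|)^{1/2}h$ in 2D and $h^{-1/2}\cdot h=h^{1/2}$ in 3D come from.

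Your convex-case argument (inverse estimate plus Aubin--Nitsche with $s=1$) is fine. It is essentially Casas's proof, which the paper simply cites.
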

\noindent {\bf Proof.}
For convex domain, we refer to \cite[Theorem 3]{Casas1985}. Therefore, we only consider the
non-convex case. Let $\zeta$ be the solution of the following dual problem
\begin{align}
  -\Div (A \Grad \zeta )=u-u_{h}, \quad \textrm{in $\bm{x} \in \Omega$},
  \qquad
  \zeta=0~ \textrm{ on $\partial \Omega$}.
  \label{dual-pde}
\end{align}
A Lagrange FEM for this dual problem is to seek $\zeta_h \in V_{h,0}$ such that
\begin{align}
  a(\zeta_h, \omega_h) = (u-u_{h,}\omega_h), \quad \forall \omega_h \in V_{h,0}\,.
  \label{dual-fem}
\end{align}
Then we have
\begin{align}
  \|u-u_{h}\|_{L^{2}(\Omega)}^{2} & = -(u,\Div (A \Grad \zeta)) + (u_{h},\Div (A \Grad \zeta)) \nn              \\
  \textrm{(by \eqref{eq:veryweak}) \qquad}
                           & = \int_\Omega \zeta \,\dmu + (u_{h},\Div (A \Grad \zeta))
                             \nn                                                                                                           \\
  \textrm{(by integration by parts) \qquad}
                           & = \int_\Omega \zeta \,\dmu - a(u_h, \zeta)
                             \nn                                                                                                           \\
  \textrm{(by Galerkin orthogonality) \qquad}
                           & = \int_\Omega \zeta  \,\dmu - a(u_h, \zeta_h)
                             \nn                                                                                                           \\
  \textrm{(by \eqref{true-FEM}) \qquad}
                           & = \int_\Omega \zeta  \,\dmu - \int_\Omega \zeta_h\,\dmu
                             \nn                                                                                                           \\
                           & \le \|\mu\|_{\mathcal{M}(\Omega)} \|\zeta-\zeta_h\|_{L^\infty(\mathcal{S})}
                             \label{global-snow}
\end{align}

Now we shall estimate the term $\|\zeta-\zeta_h\|_{L^\infty(\mathcal{S})}$, which is done by
the local $L^2$/$H^1$ and the maximum estimate technique in Lemma~\ref{lemma-Whahlbin}. Assume
$\mathcal{S} \csubset D_{1} \subset D_{1}^{'} \csubset D_{2} \csubset \Omega$, then for
quadratic and higher-order finite elements, the following estimate holds
\begin{align}
  \|\zeta-\zeta_h\|_{L^\infty(\mathcal{S})}
  & \le \|\zeta-\Pi_{h}\zeta\|_{L^{\infty}(D_{1})} + \|\zeta-\zeta_{h}\|_{L^{2}(D_{1})} \nn    \\
  & \le C h^{2-\frac{d}{2}} \|\zeta\|_{H^{2}(D_{2})} + C h^{2s} \|\zeta\|_{H^{1+s}(\Omega)} \nn \\
  & \le C h^{2-\frac{d}{2}} \|u-u_{h}\|_{L^{2}(\Omega)},
    \label{zeta-estimate-first}
\end{align}
where we have noted that the regularity index $s \in (1/2,1)$ for non-convex
Lipschitz domains, the interior $H^2$ regularity of $\zeta$ by Lemma \ref{lemma-evans}, and the
interpolation result \eqref{eq:max-interp}. Therefore, by taking the above estimate \eqref{zeta-estimate-first} into \eqref{global-snow} ,
we obtain
\begin{align}
  \|u-u_h\|_{L^2(\Omega)} \le C h^{2-\frac{d}{2}} \|\mu\|_{\mathcal{M}(\Omega)},
  \label{eq:global-proof-first}
\end{align}
for quadratic and higher-order finite elements.

Next we consider the estimate for linear FEM. Recall that the following discrete
Sobolev inequalities \cite[Section~4.9]{Brenner-Scott2008} hold for any $v_h \in V_{h,0}$
\begin{align}
  \|v_h\|_{L^{\infty}(\Omega)} \le C \, \gamma_{d,h} \, \|v_h\|_{H^{1}(\Omega)},
  \qquad \textrm{with} \quad
  \gamma_{d,h}: =
  \begin{cases}
    (1+ |\ln h|)^{1/2}\,,                   & \text{for } d=2,\\[0.3em]
    h^{-\frac{1}{2}}\,,   & \text{for } d=3.
  \end{cases}
\end{align}
For linear FEM, we estimate local maximum error of $\zeta$ in the following way
\begin{align}
  \|\zeta-\zeta_h\|_{L^\infty(\mathcal{S})}
  & \le   \|\zeta-\Pi_h\zeta\|_{L^\infty(\mathcal{S})} +
    \|\Pi_h\zeta - \zeta_h\|_{L^\infty(\mathcal{S})}
    \nn                                                                                \\
  \textrm{(By inverse inequality)\qquad}
  & \le   C h^{2-\frac{d}{2}} \|\zeta\|_{H^2(D_1)} +
    C \gamma_{d,h}\|\Pi_h\zeta - \zeta_h\|_{H^{1}(\mathcal{S})}
    \nn                                                                                \\
  \textrm{(By Lemma~\ref{local-energy-estimate2011}) \qquad}
  & \le C h^{2-\frac{d}{2}} \|\zeta\|_{H^2(D_1)}
    + C \gamma_{d,h}( \|\zeta - \Pi_h\zeta\|_{H^{1}(D_{1})}
    + \|\zeta - \zeta_h\|_{L^2(D_1 )})
    \nn                                                                                \\
  & \le C h^{2-\frac{d}{2}} \|\zeta\|_{H^2({D_1})}
    + C \gamma_{d,h} ( h\|\zeta\|_{H^{2}(D_{2})}
    + h^{2s}\|\zeta\|_{H^{1+s}(\Omega)})
    \nn                                                                                \\
  & \le C h^{2-\frac{d}{2}} \|\zeta\|_{H^2({D_1})}
    + C \gamma_{d,h} h \| \zeta\|_{H^{1+s}(\Omega)}
    \nn                                                                                \\
  & \le C \left( h^{2-\frac{d}{2}} + \gamma_{d,h} h \right) \| \zeta\|_{H^{1+s}(\Omega)}
    \nn                                                                                \\
  & \le C \gamma_{d,h}  h \|u-u_{h}\|_{L^2({\Omega})} \,.
    \label{zeta-estimate-second}
\end{align}
From \eqref{zeta-estimate-second} we can deduce that for linear FEM
\begin{align}
  \|u-u_h\|_{L^2(\Omega)} \le
  \begin{cases}
    C (1+ |\ln h|)^{1/2} \, h \, \|\mu\|_{\mathcal{M}(\Omega)}, & \text{for } d=2,\\[0.3em]
    C h^{\frac{1}{2}} \|\mu\|_{\mathcal{M}(\Omega)}, & \text{for } d=3.
  \end{cases}  
  \label{eq:global-proof-second}
\end{align}
Combining \eqref{eq:global-proof-first} and \eqref{eq:global-proof-second} we obtain the desired result. \endproof

\begin{remark}
  \rm The convergence rates are independent of the polynomial degree $k$. The
  only exception is an additional factor $|\ln h|^{1/2}$ appearing for linear
  finite elements ($k=1$) on non-convex polygons; this factor has little effect
  in numerical experiments. The above $L^2(\Omega)$ error estimate is sharp, as
  verified numerically in several previous works for convex domains. The
  convexity assumption on the domain $\Omega$ is not essential: the proof relies only
  on interior $H^2$-regularity of the dual solution $\zeta$. Moreover, the
  regularity of the very weak solution $u$ does not enter explicitly into the
  argument. Whether the logarithmic factor $|\ln h|^{1/2}$ can be removed in the
  two-dimensional non-convex polygonal case remains open.

\end{remark}

% ----------------------------------------------------------------
\section{An improved local error estimate on subdomains away from source}
\label{sect-localestimate}

In this section, we prove that the FEM \eqref{true-FEM} has a quasi-optimal convergence rate
on regions away from the support of measure-valued sources. In other words,
there is almost no pollution in finite element solution for singular sources on
general domains. Recall that $\mathcal{S}:=\operatorname{supp}(\mu)$ and
$\mathcal{S}\csubset\Omega$. For a fixed $r>0$, we introduce the subdomain $\Omega_r \subset \Omega$
by excluding an $r$-neighborhood of $\mathcal{S}$,
\[
  \Omega_r := \{x \in \Omega : \operatorname{dist}(x,\mathcal{S}) > r\}.
\]

We present the main result on local errors on subdomain $\Omega_r$ in the following
theorem, which indicates that the reduced global convergence rates are entirely
due to the local singularity of the solution in the neighborhood of
$\mathcal{S}$.

\begin{theorem}
  \label{main-thm-local}
  There exists a constant $C>0$, independent of $h$, such that for sufficiently
  small $h>0$, the following local error estimates hold:
  \begin{empheq}[left=\empheqlbrace]{align}
    & \|u-u_h\|_{L^2(\Omega_r)} \le C \left(h^{k+1}|\ln h|^{\delta^{k,1}} + h^{2s}\right) \|\mu\|_{\mathcal{M}(\Omega)},
    \label{local-L2-linear}
    \\
    & \|u-u_h\|_{H^1(\Omega_r)} \le C h^{\min\{k,s\}} \|\mu\|_{\mathcal{M}(\Omega)}.
    \label{local-H1-linear}
  \end{empheq}
  Here, $s$ is the regularity index defined in \eqref{index-unified}, and $k$ denotes the
  polynomial degree of the Lagrange finite element space $V_h$.
\end{theorem}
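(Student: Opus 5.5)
Since $u$ is harmonic-like (i.e.\ satisfies $-\Div(A\Grad u)=0$) away from $\mathcal{S}$, the plan is a local-plus-global splitting. Introduce nested sets $\Omega_r \subset \Omega_{r/2}\subset\Omega_{r/4}$. Two features complicate the standard argument: these sets reach $\partial\Omega$, and Lemma~\ref{lemma-local-energy-Demlow} is stated only for interior subdomains. I would handle this by splitting $\Omega_r$ into an interior part $G\csubset\Omega$ that stays away from $\partial\Omega$, and a boundary layer $B$ lying between $\mathcal{S}$ and $\partial\Omega$. The error on $B$ will be controlled by a duality argument with a source localized away from $\mathcal{S}$. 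In fact I expect the cleanest route is to run both estimates through duality and local energy arguments directly on $\Omega_r$.

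\textbf{Step 1 (local $L^2$ by duality).} Let $\phi\in L^2(\Omega)$ be supported in $\Omega_r$, and let $\zeta\in V$ solve $-\Div(A\Grad\zeta)=\phi$ with $\zeta=0$ on $\partial\Omega$, with discrete approximation $\zeta_h$. Exactly as in \eqref{global-snow}, the very weak formulation \eqref{eq:veryweak} and Galerkin orthogonality give
\[
(u-u_h,\phi)=\int_\Omega(\zeta-\zeta_h)\,\dmu\le\|\mu\|_{\mathcal{M}(\Omega)}\|\zeta-\zeta_h\|_{L^\infty(\mathcal{S})}.
\]
The key point is that $\phi\equiv0$ on the $r$-neighbourhood $D_1$ of $\mathcal{S}$. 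Lemma~\ref{lemma-evans} then gives $\|\zeta\|_{W^{k+1,\infty}(D_1')}\le C\|\zeta\|_{H^{k+1+m}(D_1')}\le C\|\phi\|_{L^2}$ on a slightly smaller set $D_1'\supset\mathcal{S}$, using Sobolev embedding. We apply Lemma~\ref{lemma-Whahlbin} with $D_0$ a neighbourhood of $\mathcal{S}$ and $D_1'$, taking the negative norm index $p=0$ (that is, the $L^2$ norm suffices). This yields
\[
\|\zeta-\zeta_h\|_{L^\infty(\mathcal{S})}\le C\bigl(h^{k+1}|\ln h|^{\delta^{k,1}}\|\phi\|_{L^2}+\|\zeta-\zeta_h\|_{L^2(\Omega)}\bigr).
\]
The global Aubin--Nitsche estimate together with Lemma~\ref{lemma-regularity} gives $\|\zeta-\zeta_h\|_{L^2(\Omega)}\le Ch^{2s}\|\phi\|_{L^2}$. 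Taking the supremum over $\phi$ proves \eqref{local-L2-linear}.

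\textbf{Step 2 (local $H^1$).} On $\Omega_{r/2}$ the function $u$ satisfies the homogeneous equation, and $u_h$ satisfies the discrete Galerkin relation for test functions vanishing near $\mathcal{S}$. The interior part is handled by Lemma~\ref{lemma-local-energy-Demlow} with $s=1$:
\[
\|u-u_h\|_{H^1(G)}\le C\bigl(\inf_{\chi_h}\|u-\chi_h\|_{H^1(G_d)}+\|u-u_h\|_{L^2(G_d)}\bigr).
\]
By Lemma~\ref{lemma-evans}, the first term is $O(h^k)\|u\|_{L^2}\le Ch^k\|\mu\|_{\mathcal{M}(\Omega)}$, using Lemma~\ref{lem:very-weak}. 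The second term is bounded by Step 1 applied with $r/2$, giving $h^{k+1}|\ln h|^{\delta^{k,1}}+h^{2s}\le C h^{\min\{k,s\}}$ since $2s>s$.

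\textbf{Main obstacle.} The boundary portion is the hardest step. Near $\partial\Omega$ the function $u$ has only the corner regularity $H^{1+s}$ there: $u=w$ with $w$ smooth near $\mathcal{S}$'s complement up to corners. This comes from writing $u=\eta u+(1-\eta)u$, where $\eta$ is a cutoff equal to $1$ near $\mathcal{S}$. The part $(1-\eta)u$ solves a problem with $L^2$-type right-hand side $[\Div(A\Grad),\eta]u$, which is supported away from $\mathcal{S}$ and is therefore bounded by $C\|\mu\|_{\mathcal{M}(\Omega)}$ via Lemma~\ref{lemma-evans}. I would establish a boundary-adapted local energy estimate: test with $\omega^2(u_h-\Pi_h u)$, where the cutoff $\omega$ vanishes only near $\mathcal{S}$ (it is allowed to be nonzero up to $\partial\Omega$, where all functions vanish). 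This is the standard Nitsche--Schatz superapproximation argument, valid because no interior boundary appears on $\partial\Omega$. It gives
\[
\|u-u_h\|_{H^1(\Omega_r)}\le C\bigl(\inf_{\chi_h}\|u-\chi_h\|_{H^1(\Omega_{r/2})}+\|u-u_h\|_{L^2(\Omega_{r/2})}\bigr),
\]
and the infimum is bounded by $Ch^{\min\{k,s\}}\|u\|_{H^{1+s}(\Omega_{r/2})}\le Ch^{\min\{k,s\}}\|\mu\|_{\mathcal{M}(\Omega)}$. Verifying this superapproximation estimate near reentrant corners, without extra assumptions on the mesh, is where I expect the difficulty to lie.
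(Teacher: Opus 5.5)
Your proposal is correct and follows the paper's route. The $L^2$ bound uses the same duality argument: dual data supported in $\Omega_r$, interior regularity of the dual solution near $\mathcal{S}$, and Lemma~\ref{lemma-Whahlbin} with the negative-norm term bounded by the global Aubin--Nitsche $h^{2s}$ estimate. The $H^1$ bound is a local energy estimate fed by that local $L^2$ bound. You go beyond the paper in noting that $\Omega_r$ reaches $\partial\Omega$, so Lemma~\ref{lemma-local-energy-Demlow} must be used in a boundary-adapted form, with the approximation term controlled by $\|u\|_{H^{1+s}(\Omega_{r/2})}\le C\|\mu\|_{\mathcal{M}(\Omega)}$; this makes explicit what the paper calls ``direct'' and shows where the rate $h^{\min\{k,s\}}$ comes from. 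The superapproximation step you flag is not an obstacle at corners: it is purely elementwise, and the cut-off test functions still vanish on $\partial\Omega$.
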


\noindent {\bf Proof.}
Let $\xi$ be the solution of the following elliptic problem with homogeneous
Dirichlet boundary condition
\begin{align}
  -\Div(A\Grad \xi) =(u-u_h) \, {\mathcal{X}}_{\scriptscriptstyle \Omega_r}, \quad \textrm{for $\bm{x} \in \Omega$},
  \qquad
  \xi = 0~ \textrm{ on $\partial \Omega$},
  \label{dual-pde-local}
\end{align}
where the characteristic function is defined by
\[
  \mathcal{X}_{\Omega_{r}}=
  \begin{cases}
    1, & \bm{x} \in \Omega_{r},                  \\
    0, & \bm{x} \in \Omega \setminus \Omega_{r}.
  \end{cases}
\]
A Lagrange FEM for this dual problem is to seek $\xi_h \in V_{h,0}$ such that
\begin{align}
  a( \xi_h, \omega_h) = ((u-u_h) \, {\mathcal{X}}_{\scriptscriptstyle \Omega_r} \, , \, \omega_h),
  \quad \forall \omega_h \in V_{h,0}\,.
  \label{dual-fem-local}
\end{align}

By multiplying both side of \eqref{dual-fem-local} with $u-u_h$ and applying a technique similar to
the one used in \eqref{global-snow}, the local error admits the bound
\begin{align}
  \|u-u_h\|_{L^2(\Omega_r)}^2 & = -(u,\Div (A \Grad \zeta)) + (u_{h},\Div (A \Grad \zeta)) \nn                \\
                       & \le \|\mu\|_{\mathcal{M}(\Omega)} \|\zeta-\zeta_h\|_{L^\infty(\mathcal{S})} .
                         \label{estimate0}
\end{align}

We shall now estimate the local maximum norm error defined on $\mathcal{S}$. Let
$\mathcal{S} \csubset D_0 \csubset D_{0}^{'}\csubset D_1 \csubset \Omega_r$. By
noting local maximum norm estimate \eqref{whahlbin-estimate} in Lemma~\ref{lemma-Whahlbin}, we have
\begin{align}
  \|\xi-\xi_h\|_{L^\infty(\mathcal{S})}
  & \le C \left( h^{k+1}\left|\ln h\right|^{\delta^{k,1}} \|\xi\|_{W^{k+1,\infty}(D_1)}
    + \|\xi-\xi_h\|_{H^{-p}(D_0)}  \right)
    \nn                                                                                    \\
  & \le C \left( h^{k+1}\left|\ln h\right|^{\delta^{k,1}} \|\xi\|_{H^2(\Omega)}
    + \|\xi-\xi_h\|_{H^{-p}(D_0)}  \right)
    \nn                                                                                    \\
  & \le C \left( h^{k+1}\left|\ln h\right|^{\delta^{k,1}} \|u-u_h\|_{L^2(\Omega_r)}
    + \|\xi-\xi_h\|_{H^{-p}(D_0)}  \right),
    \label{whahlbin-estimate-5}
\end{align}
where we have used the interior regularity away from the source in Lemma \eqref{lemma-evans}.
Thus, it remains to provide a local negative norm estimate. It is clear that
\[
  \|\xi-\xi_h\|_{H^{-p}(D_0)} \le \|\xi-\xi_h\|_{L^{2}(\Omega)}
\]
By classical Aubin--Nitsche duality argument, there holds
\begin{align}
  \|\xi-\xi_h\|_{L^{2}(\Omega)} \le C h^{2 s} \|\xi\|_{H^{1+s}(\Omega)} \le C h^{2 s} \|u-u_h\|_{L^2(\Omega_r)}.
  \label{case-bad}
\end{align}
Taking the above estimate into \eqref{whahlbin-estimate-5}, we can derive
\begin{align}
  \|\xi-\xi_h\|_{L^\infty(\mathcal{S})}
  \le C \left( h^{k+1} \left|\ln h\right|^{\delta^{k,1}}  + h^{2 s} \right) \|u-u_h\|_{L^2(\Omega_r)}
  \label{whahlbin-estimate-5.1}
\end{align}
which leads to the desired local $L^2$ estimate
\begin{align}
  \|u-u_h\|_{L^2(\Omega_r)} \le C(h^{k+1}\left|\ln h\right|^{\delta^{k,1}}  + h^{2 s})\|\mu\|_{\mathcal{M}(\Omega)}
  \label{estimate0-end}
\end{align}
The local $H^1$ estimate can be obtained by Lemma~\ref{lemma-local-energy-Demlow} directly. \endproof

% --------------------------------------------------------- the case H^3
% regularity holds ---------------------------------------------------------

In several cases, the local error estimates away from the support of the
singular measure can be higher.

\begin{corollary}
  \label{coro:interior}
  If the domain $\Omega$ is a convex polygon with all interior angles less than or
  equal to $\pi/2$ in two-dimensional space, or $\Omega$ is a convex polyhedron with
  all interior edge angles less than or equal to $\pi/2$ in three-dimensional
  space, then there holds for sufficiently small $h>0$ and $k=1,2$
  \begin{empheq}[left=\empheqlbrace]{align}
    & \|u-u_h\|_{L^2(\Omega_r)} \le C h^{k+1} |\ln h|^{\delta^{k,1}}\|\mu\|_{\mathcal{M}(\Omega)}\,,
    \label{local-L2-soso}
    \\
    & \|u-u_h\|_{H^1(\Omega_r)} \le C h^k \|\mu\|_{\mathcal{M}(\Omega)}\,.
    \label{local-H1-soso}
  \end{empheq}
  Furthermore, if $\Omega$ is a rectangle or equilateral triangle in 2D or a cube in
  3D, then \eqref{local-L2-soso} and \eqref{local-H1-soso} hold for any $k \in \mathbb{N}$.
\end{corollary}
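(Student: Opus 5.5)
The plan is to repeat the duality argument from the proof of Theorem~\ref{main-thm-local}, and to improve only the single step where the rate $h^{2s}$ enters, namely the bound \eqref{case-bad} for the negative-norm error $\|\xi-\xi_h\|_{H^{-p}(D_0)}$. We again let $\xi$ solve the dual problem \eqref{dual-pde-local} with right-hand side $g:=(u-u_h)\mathcal{X}_{\Omega_r}$, and let $\xi_h$ be its Galerkin approximation \eqref{dual-fem-local}. As in \eqref{estimate0}, the very weak formulation \eqref{eq:veryweak}, Galerkin orthogonality and \eqref{true-FEM} give
\[
  \|u-u_h\|_{L^2(\Omega_r)}^2 \le \|\mu\|_{\mathcal{M}(\Omega)}\|\xi-\xi_h\|_{L^\infty(\mathcal{S})}.
\]
Lemma~\ref{lemma-Whahlbin}, combined with the interior regularity of Lemma~\ref{lemma-evans}, then gives \eqref{whahlbin-estimate-5}. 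It remains to show that $\|\xi-\xi_h\|_{H^{-p}(D_0)}\le C h^{k+1}\|u-u_h\|_{L^2(\Omega_r)}$ for $k$ in the stated range.

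For this negative-norm step, I would bound $\|\xi-\xi_h\|_{H^{-p}(D_0)}$ by $\|\xi-\xi_h\|_{L^2(\Omega)}$ and use the Aubin--Nitsche argument on the whole domain. The goal is
\[
  \|\xi-\xi_h\|_{L^2(\Omega)} \le C h \,\|\xi-\xi_h\|_{H^1(\Omega)} \le C h\cdot h^{k}\|\xi\|_{H^{k+1}(\Omega)}.
\]
The first inequality needs only $H^2$-regularity of the auxiliary dual problem, which holds because $\Omega$ is convex (Lemma~\ref{lemma-regularity} with $s=1$). The second needs the global regularity $\xi\in H^{k+1}(\Omega)$ with $\|\xi\|_{H^{k+1}(\Omega)}\le C\|g\|_{L^2}$. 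For $k=1$ this is immediate from convexity.

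For $k=2$ we need $\xi\in H^3(\Omega)$, and this is where Lemma~\ref{lemma-interior-source} enters. The datum $g$ is only in $L^2$ and its support reaches the boundary, so Lemma~\ref{lemma-interior-source} cannot be applied to $\xi$ directly. This is the main obstacle. To get around it, I would split $\xi$ with a cutoff $\eta\in C_0^\infty(\Omega)$ that equals $1$ on a neighbourhood of $\overline{D_1}$ and vanishes near $\partial\Omega$. Write $\xi=\eta\xi+(1-\eta)\xi$.
\begin{itemize}
\item The piece $\eta\xi$ is compactly supported. It is smooth except where $g$ is only $L^2$, and its error contribution is handled by the interior estimates.
\item The piece $(1-\eta)\xi$ lives away from $\mathcal{S}$.
\end{itemize}
A cleaner alternative is to modify the dual datum itself: replace $\mathcal{X}_{\Omega_r}$ by a test function $\psi\in C_0^\infty(\Omega_r)$ and estimate $(u-u_h,\psi)$ by duality. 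The local norm is then recovered via Lemma~\ref{lemma-local-energy-Demlow} and a negative-norm argument in the style of Nitsche--Schatz. With a datum $\psi\in H^1_0(D)$ for some $D\Subset\Omega$, Lemma~\ref{lemma-interior-source} gives $\|\xi\|_{H^3(\Omega)}\le C\|\psi\|_{H^1}$. Since the smoothness of $\psi$ is measured in negative norms, this yields a global $L^2$ error of order $h^{k+1}$ for $k\le2$. For a rectangle, equilateral triangle or cube, \eqref{index-interior-source3} gives $\|\xi\|_{H^{k+1}}\le C\|\psi\|_{H^{k-1}}$ for every $k$, so the same argument works for all $k\in\mathbb{N}$.

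Inserting $\|\xi-\xi_h\|_{H^{-p}(D_0)}\le Ch^{k+1}(\cdots)$ into \eqref{whahlbin-estimate-5} and dividing by the $L^2(\Omega_r)$ error (or taking the supremum over $\psi$) yields \eqref{local-L2-soso}. Finally, \eqref{local-H1-soso} follows from Lemma~\ref{lemma-local-energy-Demlow} with $s=1$ on $\Omega_r\subset\Omega_{r/2}$. Since $u$ is smooth on $\Omega_{r/2}$ by Lemma~\ref{lemma-evans}, the approximation term there is $O(h^k)$, and the $L^2(\Omega_{r/2})$ error is $O(h^{k+1}|\ln h|)=O(h^k)$ by \eqref{local-L2-soso} applied with $r/2$ in place of $r$.
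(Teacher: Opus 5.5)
\textbf{There is a genuine gap, in the step for $k\ge 2$.} Your setup, the reduction to bounding $\|\xi-\xi_h\|_{H^{-p}(D_0)}$, the case $k=1$, and the $H^1$ step (which mirrors the paper's) are fine. You also correctly see why \eqref{case-bad} cannot give $h^{3}$ for $k=2$: the datum $g=(u-u_h)\mathcal{X}_{\Omega_r}$ is only in $L^2$, jumps across $\partial\Omega_r$ and reaches $\partial\Omega$, so $\xi\notin H^3(\Omega)$ in general.

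Neither of your workarounds closes this gap. The cutoff splitting is not an argument: $\xi_h$ does not decompose into Galerkin approximations of $\eta\xi$ and $(1-\eta)\xi$. Controlling the localized piece by interior estimates needs exactly the negative-norm bound you are trying to prove.

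The smooth-datum route does give, for $\psi\in C_0^\infty(\Omega_r)$,
\[
|(u-u_h,\psi)|\le Ch^{k+1}|\ln h|^{\delta^{k,1}}\|\mu\|_{\mathcal{M}(\Omega)}\|\psi\|_{H^{k-1}(\Omega)} .
\]
But this only controls $u-u_h$ in the dual of $H^{k-1}_0(\Omega_r)$. To recover $\|u-u_h\|_{L^2(\Omega_r)}$ you need a local estimate with a \emph{negative} norm on the right-hand side. It must hold on a set containing a whole neighbourhood of $\partial\Omega$, corners included, because $\Omega_r$ is the complement of a neighbourhood of $\mathcal{S}$. Lemma~\ref{lemma-local-energy-Demlow} cannot do this, since its right-hand side contains $\|u-u_h\|_{L^2(D_d)}$. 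The Nitsche--Schatz negative-norm estimates you allude to are interior estimates requiring $D_d\csubset\Omega$. So the proof is incomplete for $k=2$, and for general $k$ on rectangles, equilateral triangles and cubes.

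\textbf{The missing idea is a second duality for the local negative norm of $\xi-\xi_h$.} This moves the smoothness requirement from $\xi$ onto a new auxiliary solution. Keep $g$ as the datum, so the first duality already produces $\|u-u_h\|_{L^2(\Omega_r)}^2$ and no recovery step is needed. For $f\in H^p_0(D_0)$ with unit norm:
\begin{itemize}
\item extend $f$ by zero to $\tilde f$;
\item let $\theta$ solve $-\Div(A\Grad\theta)=\tilde f$ with $\theta=0$ on $\partial\Omega$;
\item let $\theta_h\in V_{h,0}$ be its Galerkin approximation.
\end{itemize}
Since $a$ is symmetric and $\xi_h,\theta_h\in V_{h,0}$,
\[
(\xi-\xi_h,\tilde f)=a(\xi,\theta)-a(\xi_h,\theta_h)=(g,\theta-\theta_h)\le\|u-u_h\|_{L^2(\Omega_r)}\,\|\theta-\theta_h\|_{L^2(\Omega)}.
\]
The datum $\tilde f$ is smooth and compactly supported near $\mathcal{S}$, so Lemma~\ref{lemma-interior-source} applies to $\theta$ rather than to $\xi$. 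It gives $\|\theta\|_{H^3(\Omega)}\le C\|f\|_{H^1}$, and $\|\theta\|_{H^{k+1}(\Omega)}\le C\|f\|_{H^{k-1}}$ on rectangles, equilateral triangles and cubes. Aubin--Nitsche on the convex domain then gives $\|\theta-\theta_h\|_{L^2(\Omega)}\le Ch^{k+1}$. Hence $\|\xi-\xi_h\|_{H^{-p}(D_0)}\le Ch^{k+1}\|u-u_h\|_{L^2(\Omega_r)}$ with $p=\max\{k-1,1\}$, and inserting this into \eqref{whahlbin-estimate-5} yields \eqref{local-L2-soso}.

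In short, you had the right two dualities in the wrong order. The smooth datum belongs in the inner problem, where Lemma~\ref{lemma-Whahlbin} only asks for a negative norm.
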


\noindent {\bf Proof}. We first prove the local $L^2$ estimate \eqref{local-L2-soso}. Recall that
$\xi$ is the solution of the following elliptic problem
\begin{align}
  -\Div(A \Grad \xi) =(u-u_h) \, {\mathcal{X}}_{\scriptscriptstyle \Omega_r}, \quad \textrm{in $\bm{x} \in \Omega$},
  \qquad
  \xi = 0~ \textrm{ on $\partial \Omega$},
\end{align}
A key step is to provide a higher convergence rate for $\|\xi-\xi_h\|_{H^{-p}(D_0)}$
on the right-hand side of \eqref{whahlbin-estimate-5}. To this end, let $f \in H_0^p(D_0)$ satisfying
$\|f\|_{H_0^p(D_0)}=1$ and
\begin{align}
  \|\xi-\xi_h\|_{H^{-p}(D_0)} = (\xi-\xi_h,f).
\end{align}
By the Sobolev extension theorem, there exists an extension
$\tilde{f}\in H_0^p(\Omega)$ such that $\tilde{f}|_{D_0}=f$ and
$\|\tilde{f}\|_{H^p(\Omega)}\le C\|f\|_{H^p(D_0)}$. Hence, we consider a dual problem
with interior support
\begin{align}
  -\Div(A\Grad \theta) =\tilde{f}, \quad \textrm{for $\bm{x} \in \Omega$},
  \qquad
  \theta = 0~ \textrm{ on $\partial \Omega$},
  \label{dual-pde-local-end}
\end{align}
and the corresponding FEM
\begin{align}
  a(\theta_h, \omega_h) = (\tilde{f},\omega_h), \quad \forall \omega_h \in V_{h,0}\,.
  \label{dual-fem-end}
\end{align}
Taking the inner product of both side of \eqref{dual-pde-local-end} with $\xi-\xi_h$ over $\Omega$ yields
\begin{align}
  \|\xi-\xi_h\|_{H^{-p}(D_0)} = (\xi-\xi_h, f)
  & = (\xi, -\Div(A\Grad \theta)) - a(\xi_h, \theta_h)
    \nn                                                                            \\
  & =(u-u_h, \theta \mathcal{X}_{\scriptscriptstyle \Omega_r})
    - (u-u_h, \theta_h {\mathcal{X}}_{\scriptscriptstyle \Omega_r})
    \nn                                                                            \\
  & =(u-u_h, (\theta - \theta_h) {\mathcal{X}}_{\scriptscriptstyle \Omega_r})
    \nn                                                                            \\
  & \le \|u-u_h\|_{L^2(\Omega_r)} \, \|\theta - \theta_h\|_{L^2(\Omega_r)}.
\end{align}
By noting the fact that $H^3$ regularity holds for $\Omega$ with interior source in
Lemma~\ref{lemma-interior-source}, we have
\begin{align}
  \|\theta - \theta_h\|_{L^2(\Omega_r)}
  \le C h^{k+1} \|\theta\|_{H^{k+1}(\Omega)}
  \le C h^{k+1} \|f\|_{H^{1}(\Omega)},
  \qquad \textrm{for $k=1$, $2$.}
\end{align}
Therefore, there holds
\begin{align}
  \|\xi-\xi_h\|_{H^{-1}(D_0)} \le C h^{k+1} \|u-u_h\|_{L^2(\Omega_r)},
  \qquad \textrm{for $k=1$, $2$.}
\end{align}
As a result, the local maximum norm estimate can be improved to
\begin{align}
  \|\xi-\xi_h\|_{L^\infty(\mathcal{S})} \le C h^{k+1}\left|\ln
  h\right|^{\delta^{k,1}}\|\xi\|_{W^{k+1,\infty}(D_1)} + C h^{k+1} \|u-u_h\|_{L^2(\Omega_r)},
  \quad  \textrm{for $k=1$, $2$},
  \label{whahlbin-estimate-6}
\end{align}
which implies that \eqref{local-L2-soso} holds.

Furthermore, if $\Omega$ is a rectangle or equilateral triangle in 2D or a cube in
3D, then the solution $\theta$ to the problem \eqref{dual-pde-local-end} admits enough regularity. That is,
we have for any $p \ge 0$
\begin{align}
  \|\theta\|_{H^{p+2}(D_1)} \le C \|\tilde{f}\|_{H^p(\Omega)}   \le C \|f\|_{H^p(D_0)},
  \qquad \text{for any} ~ D_0 \csubset D_1 \csubset \Omega.
\end{align}
Hence, we have for any $k\in \mathbb{N}$
\begin{align}
  \|\theta - \theta_h\|_{L^2(\Omega_r)}
  \le C h^{k+1} \|\theta\|_{H^{k+1}(\Omega)}
  \le C h^{k+1} \|f\|_{H^{k-1}(\Omega)}.
\end{align}
Correspondingly, there holds for any $k \in \mathbb{N}$
\begin{align}
  \|\xi-\xi_h\|_{H^{-p}(D_0)} \le C h^{k+1} \|u-u_h\|_{L^2(\Omega_r)}.
\end{align}
Taking the above estimate into \eqref{whahlbin-estimate} gives that for any $k$-th Lagrange FEM
\begin{align}
  \|\xi-\xi_h\|_{L^\infty(\mathcal{S})}
  \le C h^{k+1}\left|\ln h\right|^{\delta^{k,1}} \|u-u_h\|_{L^2(\Omega_r)}
  + C h^{k+1}  \|u-u_h\|_{L^2(\Omega_r)},
  \label{whahlbin-estimate-7}
\end{align}
which implies the desired $L^2$ estimate \eqref{local-L2-soso}.

Also, the local $H^1$ estimates \eqref{local-H1-soso} follows from the local energy estimates in
Lemma~\ref{lemma-local-energy-Demlow}. \endproof

\section{Numerical Experiments}
\label{numerical-experiments}

In this section, we present several numerical experiments to illustrate our
theoretical results. All computations are carried out using the free software
packages FEniCSx \cite{BarattaEtal2023} and GetFEM \cite{Renard-Poulios}. The meshes are generated with Gmsh \cite{geuzaineGmsh3DFinite2009}. In
all tests, we consider the problem
\begin{equation}
  -\Delta u = \mu \quad \text{in } \Omega, \qquad
  u = 0 \quad \text{on } \partial \Omega.
  \label{eq:pde-in-numerical}
\end{equation}
Here, $\mu$ is a point source in Examples~\ref{example-Lshape} and \ref{example:hexagon-domain}, and a line source in
Example~\ref{example-3D}. Since the exact solution is not available, the solution computed on
a sufficiently fine mesh is taken as a reference solution for the evaluation of
convergence rates. We denote by $N_{\text{ref}}$ the level of uniform refinement
with respect to the initial mesh. We note that the regularity of the exact
solution away from the source increases from Examples~\ref{example-Lshape} to \ref{example-3D}, and
accordingly the observed local convergence rates rise across these examples.

\begin{example}[Point source on a non-convex polygon]
  \label{example-Lshape}
  \rm In this example we solve the model problem \eqref{eq:pde-in-numerical} on the non-convex L-shaped
  domain $\Omega = (-1,1)^{2}\setminus [0,1)\times (-1,0]$, with a point source
  $\mu = \delta_{(-\frac{1}{2},\frac{1}{2})}$. We define two subdomains around the
  source by
  \begin{align}
    B_{1} &= \{\bm{x} \in \Omega : \operatorname{dist}(\bm{x},(-\tfrac{1}{2},\tfrac{1}{2})) < {1}/{6} \},\\
    B_{2} &= \{\bm{x} \in \Omega : \operatorname{dist}(\bm{x},(-\tfrac{1}{2},\tfrac{1}{2})) < {1}/{10} \}.
  \end{align}
  To test convergence, we discretize with standard Lagrange finite elements of
  degree $k=1,2,3$ on successively refined meshes. The initial mesh has 65 nodes
  and 96 elements. A numerical solution with 197633 nodes (after 6 refinements)
  computed by linear FEM is used as the exact solution. Table~\ref{tab::lshape-all-vertical} reports global
  errors and local errors on subdomains $\Omega\setminus B_1$ and $\Omega\setminus B_2$ for $k=1$. The
  global $L^{2}$ error behaves like $O(h)$ for $k=1$, in agreement with our
  theoretical prediction. Since the re-entrant angle is $\Theta = 3\pi/2$ and the
  regularity index is $s = 2/3$ for the L-shape, the predicted local convergence
  rates are $O(h^{4/3})$ in the $L^2$ norm and $O(h^{2/3})$ in the $H^1$
  semi-norm, see estimates \eqref{local-L2-linear} and \eqref{local-H1-linear} in Theorem \ref{main-thm-local}. The computed local errors
  in Table~\ref{tab::lshape-all-vertical} seems to exhibit higher rates. However, this apparent improvement
  should be treated cautiously. To further verify local behavior near the
  re-entrant corner, we introduce the subdomain
  \begin{equation}
    B_{3}=\{\bm{x}\in \Omega : \operatorname{dist}(\bm{x},(0,0)) < {1}/{6}\}.
  \end{equation}
  This choice reduces the influence of global discretization errors. We show the
  errors on the subdomain $B_{3}$ in Table~\ref{tab::lshape-l2-h1-corner} and observe local convergence
  rates on $B_{3}$ with $O(h^{4/3})$ in the $L^2$ norm and $O(h^{2/3})$ in the
  $H^1$ norm, respectively. Clearly, the local errors listed in Table~\ref{tab::lshape-l2-h1-corner}
  confirm that the estimates of Theorem~\ref{main-thm-local} are sharp for non-convex polygonal
  domains.
  
  \begin{table}[htb]
    \centering
    \caption{Computed errors in both global and local norms on the L-shape
      domain for $k=1$. (Example~\ref{example-Lshape})}
    \label{tab::lshape-all-vertical}
    \resizebox{\linewidth}{!}{%
      \begin{tabular}{?c^cc|cc|cc|cc|cc?}
        \hhrule
        $N_{\text{ref}}$
        & $\|\cdot\|_{L^2(\Omega)}$ & Rate
        & $\|\cdot\|_{L^2(\Omega \setminus \overline{B}_{1})}$ & Rate
        & $\|\cdot\|_{L^2(\Omega \setminus \overline{B}_{2})}$ & Rate
        & $|\cdot|_{H^1(\Omega \setminus \overline{B}_{1})}$ & Rate
        & $|\cdot|_{H^1(\Omega \setminus \overline{B}_{2})}$ & Rate \\
        \shrule
        0 & 1.76e-02 & --- & 1.07e-02 & --- & 1.19e-02 & --- & 2.35e-01 & --- & 2.66e-01 & --- \\
        1 & 8.63e-03 & 1.03 & 3.09e-03 & 1.79 & 4.41e-03 & 1.44 & 1.18e-01 & 0.99 & 2.05e-01 & 0.38 \\
        2 & 4.26e-03 & 1.02 & 8.34e-04 & 1.89 & 1.24e-03 & 1.83 & 5.76e-02 & 1.03 & 9.90e-02 & 1.05 \\
        3 & 2.07e-03 & 1.04 & 2.50e-04 & 1.74 & 3.36e-04 & 1.89 & 2.82e-02 & 1.03 & 4.54e-02 & 1.12 \\
        4 & 9.65e-04 & 1.10 & 7.84e-05 & 1.68 & 9.58e-05 & 1.81 & 1.41e-02 & 1.00 & 2.22e-02 & 1.03 \\
        \hhrule
      \end{tabular}}
  \end{table}

  \begin{table}[htb]
    \centering \small \setlength{\tabcolsep}{4pt}
    \caption{Local errors in the $L^2(B_{3})$ norm and $H^1(B_{3})$ semi-norm
      for $k=1,2,3$ on the L-shape domain. (Example~\ref{example-Lshape})}
    \label{tab::lshape-l2-h1-corner}
    \begin{tabular}{?c^cc|cc|cc^cc|cc|cc?}
      \hhrule
      & \multicolumn{6}{c^}{\textbf{$\bm{L^2(B_3)}$ norm}}
      & \multicolumn{6}{c?}{\textbf{$\bm{H^1(B_3)}$ semi-norm}} \\
      \shrule
      $N_{\text{ref}}$
      & $k=1$ & Rate & $k=2$ & Rate & $k=3$ & Rate
                                    & $k=1$ & Rate & $k=2$ & Rate & $k=3$ & Rate \\
      \hline
      0
      & 1.80e-03 & ---  & 4.20e-04 & ---  & 1.85e-04 & ---
                                    & 2.61e-02 & ---  & 1.48e-02 & ---  & 9.16e-03 & ---  \\
      1
      & 7.04e-04 & 1.36 & 1.79e-04 & 1.23 & 7.15e-05 & 1.37
                                    & 1.99e-02 & 0.39 & 9.57e-03 & 0.63 & 6.05e-03 & 0.60 \\
      2
      & 3.01e-04 & 1.23 & 6.85e-05 & 1.39 & 2.71e-05 & 1.40
                                    & 1.33e-02 & 0.58 & 6.00e-03 & 0.67 & 3.79e-03 & 0.68 \\
      3
      & 1.22e-04 & 1.31 & 2.57e-05 & 1.42 & 1.02e-05 & 1.42
                                    & 8.51e-03 & 0.65 & 3.71e-03 & 0.69 & 2.34e-03 & 0.69 \\
      4
      & 4.52e-05 & 1.43 & 9.06e-06 & 1.50 & 3.59e-06 & 1.50
                                    & 5.17e-03 & 0.72 & 2.21e-03 & 0.74 & 1.40e-03 & 0.74 \\
      \hhrule
    \end{tabular}
  \end{table}

\end{example}

\begin{example}[Point source on a convex polygon]
  \label{example:hexagon-domain}
  \rm

  In this example we solve the model problem \eqref{eq:pde-in-numerical} with a point source
  $\mu=\delta_{(0,0)}$ concentrated at the origin $O$. The computational domain is a
  hexagon with vertices $V_1,\dots,V_6$ listed in order:
  \[
    \begin{bmatrix}
      V_1 & V_2 & V_3 & V_4 & V_5 & V_6\\
      (-\tfrac{1}{\sqrt{3}},1) & (\tfrac{1}{\sqrt{3}},1) & (\tfrac{2}{\sqrt{3}},0) &
                                                                                     (\tfrac{1}{\sqrt{3}},-1) & (-\tfrac{1}{\sqrt{3}},-1) & (-\tfrac{1}{\sqrt{3}}-\tfrac{1}{10},0)
    \end{bmatrix}.
  \]
  The interior angle near $V_6$ is close to $\pi$. The domain, initial mesh and a
  numerical solution on a fine mesh are shown in Figure~\ref{fig:example-hexagon}. We discretize with
  linear, quadratic and cubic Lagrange elements ($k=1,2,3$) on successively
  refined meshes. A numerical solution after 6 refinements computed by
  $k$th-order Lagrange FEM is used as the exact solution. The computed errors
  are reported in Table~\ref{tab:hexagon-k123}. From Table~\ref{tab:hexagon-k123} we observe that the global $L^{2}$
  error behaves like $O(h)$ for $k=1,2,3$, which agrees with Theorem~\ref{thm:global-error}.

  \begin{figure}[h]
    \centering
    \begin{minipage}[c]{0.34\linewidth}
      \begin{tikzpicture}[scale=1.6, thick]
        \pgfmathsetmacro{\invsqrt}{0.57735} % 1/sqrt(3) ≈ 0.57735
        \pgfmathsetmacro{\twoinvsqrt}{1.15470} % 2/sqrt(3) ≈ 1.15470
        \pgfmathsetmacro{\specialPoint}{-0.67735} % -2/(2√3)-0.1 ≈ -0.67735

        \coordinate (1) at (-\invsqrt, 1.0); \coordinate (2) at (\invsqrt, 1.0);
        \coordinate (3) at (\twoinvsqrt, 0.0); \coordinate (4) at (\invsqrt,
        -1.0); \coordinate (5) at (-\invsqrt, -1.0); \coordinate (6) at
        (\specialPoint, 0.0); \coordinate (-1) at ({-\twoinvsqrt}, 0.0);

        \draw[black,thick, fill=green!20, opacity=1.0] (1) -- (2) -- (3) -- (4)
        -- (5) -- (6) -- cycle; \draw[black,dashed] (1) -- (-1) -- (5);

        \begin{scope}
          \clip (1) -- (2) -- (3) -- (4) -- (5) -- (6) -- cycle; \draw[red] (6)
          circle (0.16667);
        \end{scope}

        \draw[->] (-1.5,0) -- (1.5,0) node[right] {$x$}; \draw[->] (0,-1.2) --
        (0,1.2) node[above] {$y$};

        \foreach \point/\position/\name in { 1/above right/1, 2/right/2, 3/below
          right/3, 4/below left/4, 5/left/5, 6/above right/6} { \fill[black]
          (\point) circle (0.8pt); \node[\position, black] at (\point)
          {$V_{\name}$}; } \fill[red] (0.0,0.0) circle (1.2pt);

        \foreach \x in {-1,-0.5,0.5,1} \draw (\x,0.05) -- (\x,-0.05) node[below]
        {$\x$}; \foreach \y in {-1,-0.5,0.5,1} \draw (0.05,\y) -- (-0.05,\y)
        node[left] {$\y$}; \node[below right] at (0,0) {$O$};

      \end{tikzpicture}
    \end{minipage}
    \hfill
    \begin{minipage}[c]{0.2\linewidth}
      \centering
      \includegraphics[width=0.95\textwidth]{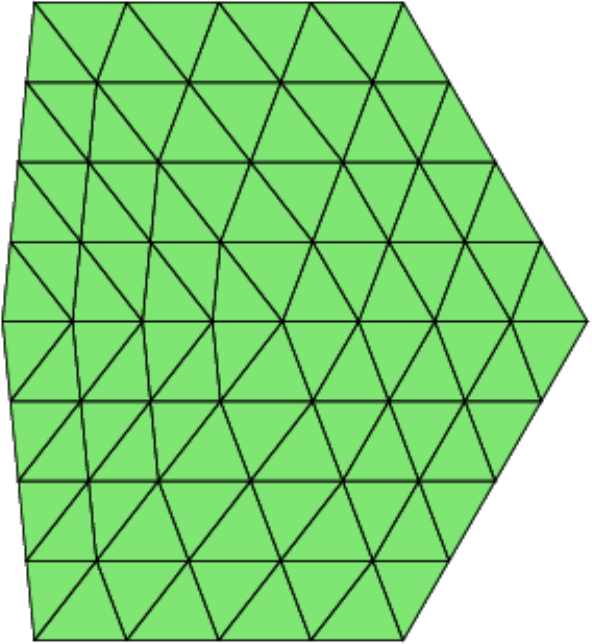}
    \end{minipage}
    \hfill
    \begin{minipage}[c]{0.42\linewidth}
      \centering
      \includegraphics[width=0.99\textwidth]{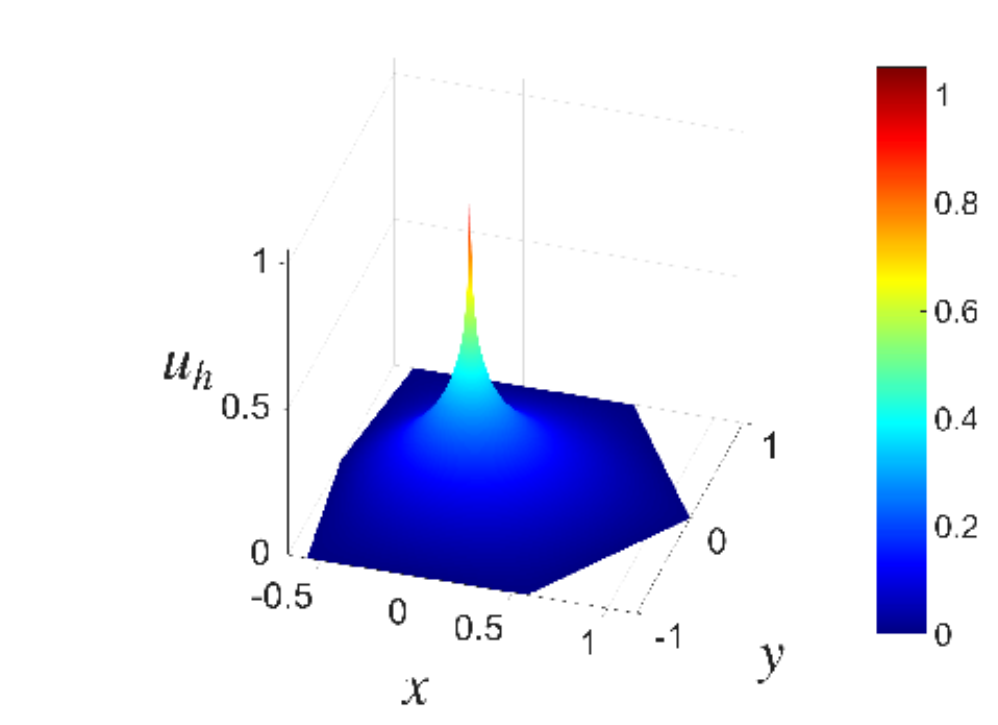}
    \end{minipage}
    \caption{The domain (left); initial mesh (middle); numerical solution on a
      fine mesh using linear FEM (right). (Example~\ref{example:hexagon-domain})}
    \label{fig:example-hexagon}
  \end{figure}
  
  To verify local convergence rates, we compute $L^{2}$ and $H^{1}$ errors on
  $\Omega\setminus B_1$ and $\Omega\setminus B_2$, where
  \begin{align}
    B_{1} &= \{\bm{x} \in \Omega : \operatorname{dist}(\bm{x},(0,0)) < {1}/{6} \},\\
    B_{2} &= \{\bm{x} \in \Omega : \operatorname{dist}(\bm{x},(0,0)) < {1}/{10} \}.
  \end{align}
  Although Table~\ref{tab:hexagon-k123} seems to exhibit local convergence rates exceeding second
  order for $k=2,3$, this behavior should be interpreted with care. To assess
  the sharpness of the local estimate in general convex polygonal domains, we
  restrict the error measurement to a neighborhood of the vertex $V_6$ and
  define
  \begin{equation}
    B_{3}=\{\bm{x}\in \Omega : \operatorname{dist}(\bm{x},V_{6}) < 1/6\}.
  \end{equation}
  The corresponding errors $\|u-u_{h}\|_{L^{2}(B_{3})}$ and
  $|u-u_{h}|_{H^{1}(B_{3})}$ are reported in Table~\ref{tab::hexagon-l2-h1-corner}. In this setting, the
  observed convergence rates are $O(h^{2})$ in the $L^{2}$ norm and $O(h)$ in
  the $H^{1}$ semi-norm, independently of the polynomial degree $k$. These rates
  coincide with the prediction of Theorem~\ref{main-thm-local} and demonstrate that the local
  error behavior is ultimately limited by the corner singularity. Hence, the
  numerical results confirm the sharpness of the theoretical estimates. We note
  that the numerical results reported in \cite{Lee-Choi2020} are insufficient to demonstrate
  their optimal local convergence results, since corner singularities of convex
  polygons were not taken into account.

  \begin{table}[htb]
    \centering
    \caption{Computed errors on the hexagon with $k=1,2,3$. (Example~\ref{example:hexagon-domain})}
    \label{tab:hexagon-k123}
    \resizebox{\linewidth}{!}{%
      \begin{tabular}{?c^cc|cc|cc|cc|cc?}
        \hhrule

        \multicolumn{11}{?c?}{\textbf{Linear FEM ($\bm{k=1}$)}}\\
        \shrule
        $N_{\text{ref}}$
        & $\|\cdot\|_{L^2(\Omega)}$ & Rate
        & $\|\cdot\|_{L^2(\Omega \setminus \overline{B}_{1})}$ & Rate
        & $\|\cdot\|_{L^2(\Omega \setminus \overline{B}_{2})}$ & Rate
        & $|\cdot|_{H^1(\Omega \setminus \overline{B}_{1})}$ & Rate
        & $|\cdot|_{H^1(\Omega \setminus \overline{B}_{2})}$ & Rate \\
        \hline
        0 & 1.73e-02 & --- & 8.01e-03 & --- & 1.01e-02 & --- & 2.07e-01 & --- & 2.40e-01 & --- \\
        1 & 1.01e-02 & 0.77 & 2.18e-03 & 1.88 & 3.57e-03 & 1.50 & 1.01e-01 & 1.03 & 1.80e-01 & 0.41 \\
        2 & 6.07e-03 & 0.74 & 6.00e-04 & 1.86 & 1.05e-03 & 1.76 & 4.96e-02 & 1.03 & 8.41e-02 & 1.10 \\
        3 & 2.58e-03 & 1.24 & 1.42e-04 & 2.08 & 2.39e-04 & 2.14 & 2.40e-02 & 1.04 & 4.01e-02 & 1.07 \\
        4 & 1.39e-03 & 0.89 & 3.69e-05 & 1.94 & 6.23e-05 & 1.94 & 1.18e-02 & 1.03 & 1.94e-02 & 1.05 \\

        \hhrule

        \multicolumn{11}{?c?}{\textbf{Quadratic FEM ($\bm{k=2}$)}}\\
        \shrule
        $N_{\text{ref}}$
        & $\|\cdot\|_{L^2(\Omega)}$ & Rate
        & $\|\cdot\|_{L^2(\Omega \setminus \overline{B}_{1})}$ & Rate
        & $\|\cdot\|_{L^2(\Omega \setminus \overline{B}_{2})}$ & Rate
        & $|\cdot|_{H^1(\Omega \setminus \overline{B}_{1})}$ & Rate
        & $|\cdot|_{H^1(\Omega \setminus \overline{B}_{2})}$ & Rate \\
        \hline
        0 & 8.64e-03 & --- & 3.15e-03 & --- & 4.26e-03 & --- & 1.00e-01 & --- & 1.52e-01 & --- \\
        1 & 5.00e-03 & 0.79 & 3.15e-04 & 3.32 & 1.22e-03 & 1.80 & 1.90e-02 & 2.40 & 7.25e-02 & 1.07 \\
        2 & 2.35e-03 & 1.09 & 3.33e-05 & 3.24 & 9.73e-05 & 3.65 & 4.26e-03 & 2.16 & 1.20e-02 & 2.60 \\
        3 & 1.35e-03 & 0.80 & 4.27e-06 & 2.96 & 1.09e-05 & 3.15 & 1.07e-03 & 2.00 & 2.79e-03 & 2.10 \\
        4 & 5.24e-04 & 1.37 & 5.18e-07 & 3.04 & 1.25e-06 & 3.12 & 2.79e-04 & 1.93 & 6.90e-04 & 2.01 \\

        \hhrule

        \multicolumn{11}{?c?}{\textbf{Cubic FEM ($\bm{k=3}$)}}\\
        \shrule
        $N_{\text{ref}}$
        & $\|\cdot\|_{L^2(\Omega)}$ & Rate
        & $\|\cdot\|_{L^2(\Omega \setminus \overline{B}_{1})}$ & Rate
        & $\|\cdot\|_{L^2(\Omega \setminus \overline{B}_{2})}$ & Rate
        & $|\cdot|_{H^1(\Omega \setminus \overline{B}_{1})}$ & Rate
        & $|\cdot|_{H^1(\Omega \setminus \overline{B}_{2})}$ & Rate \\
        \hline
        0 & 5.74e-03 & --- & 1.34e-03 & --- & 2.27e-03 & --- & 8.26e-02 & --- & 1.17e-01 & --- \\
        1 & 2.98e-03 & 0.95 & 4.34e-05 & 4.95 & 3.33e-04 & 2.77 & 5.37e-03 & 3.94 & 3.68e-02 & 1.67 \\
        2 & 1.53e-03 & 0.96 & 2.52e-06 & 4.10 & 1.15e-05 & 4.85 & 4.81e-04 & 3.48 & 1.96e-03 & 4.23 \\
        3 & 7.52e-04 & 1.03 & 2.26e-07 & 3.48 & 5.94e-07 & 4.28 & 1.16e-04 & 2.06 & 2.34e-04 & 3.07 \\
        4 & 4.01e-04 & 0.91 & 4.24e-08 & 2.41 & 5.83e-08 & 3.35 & 4.78e-05 & 1.27 & 5.40e-05 & 2.12 \\

        \hhrule
      \end{tabular}}
  \end{table}

  \begin{table}[htb]
    \centering
    \caption{Local errors in the $L^2(B_{3})$ norm and $H^1(B_{3})$ semi-norm
      for $k=1,2,3$. (Example~\ref{example:hexagon-domain})}
    \label{tab::hexagon-l2-h1-corner}
    \resizebox{\linewidth}{!}{%
      \begin{tabular}{?c^cc|cc|cc^cc|cc|cc?}
        \hhrule
        & \multicolumn{6}{c^}{\textbf{$\bm{L^2(B_3)}$ norm}}
        & \multicolumn{6}{c?}{\textbf{$\bm{H^1(B_3)}$ semi-norm}} \\
        \shrule
        $N_{\text{ref}}$
        & $k=1$ & Rate & $k=2$ & Rate & $k=3$ & Rate
                                      & $k=1$ & Rate & $k=2$ & Rate & $k=3$ & Rate \\
        \hline
        0
        & 6.61e-04 & ---  & 8.06e-05 & ---  & 1.04e-05 & ---
                                      & 8.43e-03 & ---  & 2.63e-03 & ---  & 8.15e-04 & ---  \\
        1
        & 1.13e-04 & 2.55 & 1.32e-05 & 2.61 & 2.74e-06 & 1.93
                                      & 3.94e-03 & 1.10 & 1.07e-03 & 1.29 & 4.25e-04 & 0.94 \\
        2
        & 3.12e-05 & 1.86 & 2.63e-06 & 2.32 & 6.62e-07 & 2.05
                                      & 2.03e-03 & 0.96 & 4.64e-04 & 1.21 & 2.04e-04 & 1.06 \\
        3
        & 7.60e-06 & 2.04 & 5.86e-07 & 2.17 & 1.58e-07 & 2.06
                                      & 1.08e-03 & 0.91 & 2.14e-04 & 1.12 & 9.69e-05 & 1.07 \\
        4
        & 1.93e-06 & 1.97 & 1.36e-07 & 2.11 & 3.78e-08 & 2.06
                                      & 5.57e-04 & 0.95 & 9.91e-05 & 1.11 & 4.53e-05 & 1.10 \\
        \hhrule
      \end{tabular}}
  \end{table}

\end{example}

\begin{example}[Line source on a cube]
  \label{example-3D}
  \rm In the final example we consider the 3D--1D model \eqref{eq:pde-in-numerical} on the unit cube
  $\Omega=(0,1)^3$. The right-hand side is a line measure supported on parametrized
  curves $\Lambda\subset\mathbb{R}^3$. The distribution $\delta_{\Lambda}$ is defined by
  \begin{equation}
    \langle\delta_{\Lambda},v\rangle=\int_{\Lambda} v\,\mathrm{d}s,
    \qquad \forall v\in C(\Omega).
  \end{equation}
  We introduce the scalar functions
  \begin{align}
    f_1(t) &= 0.5 + 0.1(1-t)\sin(4\pi t), \\
    f_2(t) &= 0.5 + 0.1(1-t)\cos(4\pi t), \\
    f_3(t) &= 0.3 + t,
             \label{eq::line-function-part}
  \end{align}
  and define three parametric curves
  \begin{align}
    s_1(t) &= (f_1(t),f_2(t),f_3(t)), \\
    s_2(t) &= (f_2(t),f_3(t),f_1(t)), \\
    s_3(t) &= (f_3(t),f_1(t),f_2(t)),
  \end{align}
  for $t\in[0,0.4]$. Denote $\Lambda_i=s_i([0,0.4])$ for $i=1,2,3$, and set
  \begin{align}
    B_1 &= \{\bm{x}\in\Omega:\operatorname{dist}(\bm{x},(0.5,0.5,0.5))<0.3\},\\
    B_2 &= \{\bm{x}\in\Omega:\operatorname{dist}(\bm{x},(0.5,0.5,0.5))<0.4\}.
  \end{align}

   \begin{figure}[hbt]
    \centering
    \begin{minipage}[c]{0.33\linewidth}
      \includegraphics[width=0.99\textwidth]{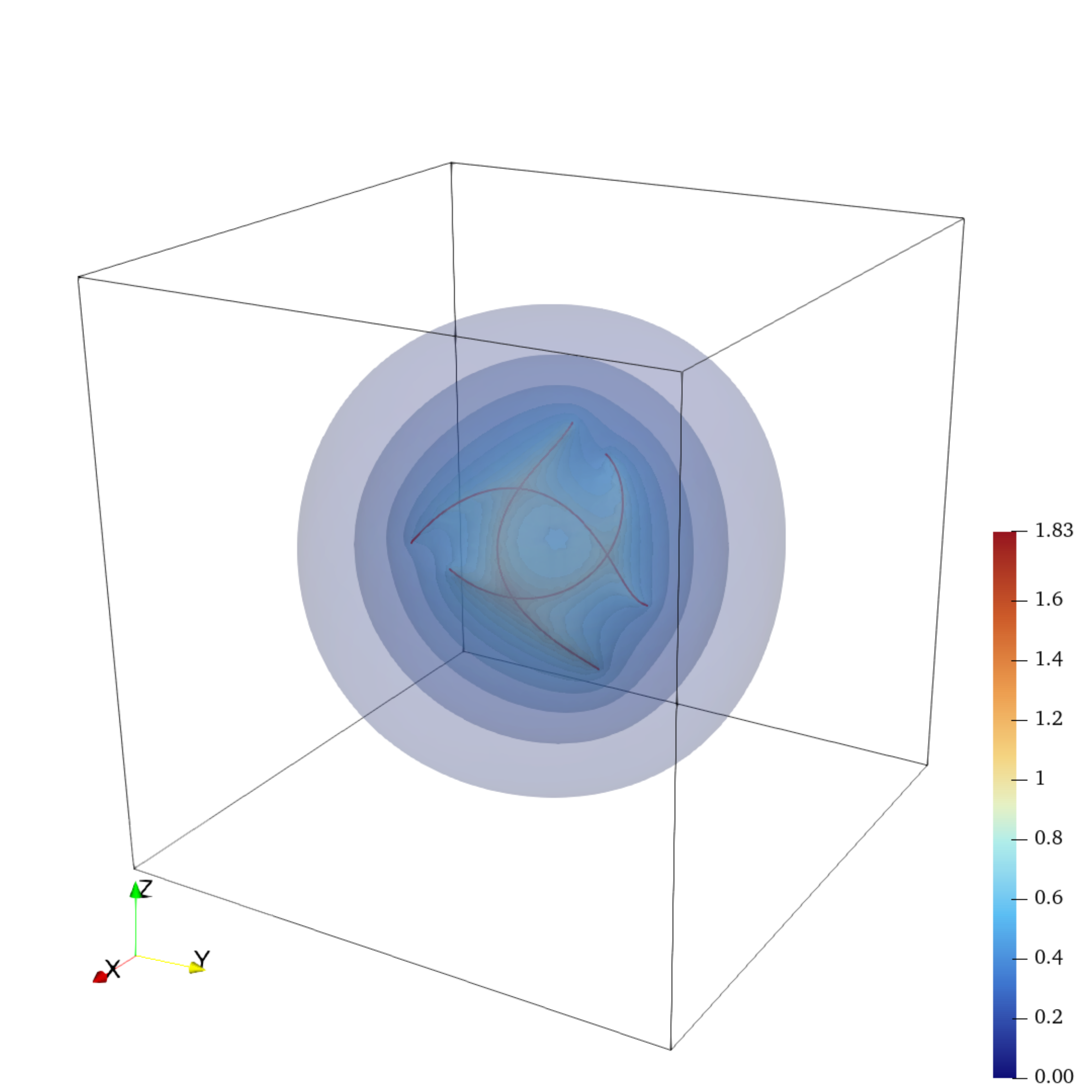}
    \end{minipage}
    \begin{minipage}[c]{0.15\linewidth}
      \hfill
    \end{minipage}
    \begin{minipage}[c]{0.23\linewidth}
      \includegraphics[width=0.99\textwidth]{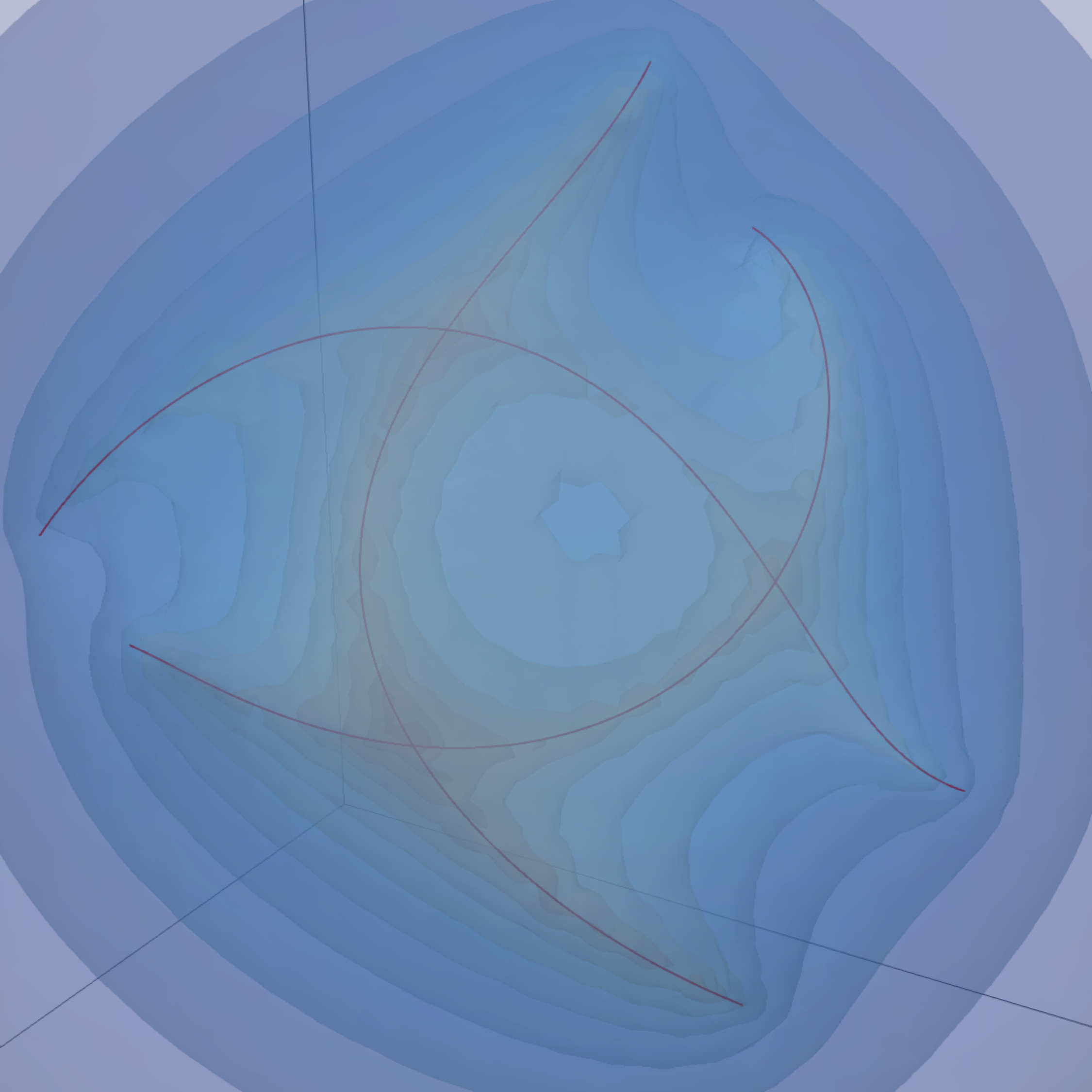}
    \end{minipage}
    \caption{Isosurface plot of the numerical solution obtained with linear
      elements ($k=1$) (left); detailed view near the sources (right).
      (Example~\ref{example-3D})}
    \label{fig:contour-curve}
  \end{figure}

  By construction $\{\Lambda_i\}_{i=1}^3\csubset B_1\csubset B_2$. Then, the measure $\mu$
  is defined by
  \begin{equation}
    \mu = 1.6\,\delta_{\Lambda_1} + 0.8\,\delta_{\Lambda_2} + 1.2\,\delta_{\Lambda_3}.
    \label{eq:mu-in-3d}
  \end{equation}

  We discretize with standard Lagrange elements of degree $k=1,2,3$ on uniformly
  refined meshes. The initial mesh has 27 nodes and 48 tetrahedra. A numerical
  solution after 5 refinements computed by $k$th-order Lagrange FEM is used as
  the exact solution. The numerical solution for $k=1$ is shown in Figure~\ref{fig:contour-curve};
  the source curves are plotted in red. The solution is concentrated near the
  curves and decays rapidly away from them, matching the expected singular
  behaviour induced by line sources.

  Errors reported in Table~\ref{tab::cube-curve} corroborate the local estimates \eqref{local-L2-soso} and \eqref{local-H1-soso} in
  Corollary~\ref{coro:interior} and indicate absence of a pollution effect. For the smooth
  parametrized curves used here the exact solution attains improved regularity
  and an $O(h)$ global rate is observed; for a general Radon measure $\mu$ such an
  improvement need not hold.

  \begin{table}[hbt]
    \centering
    \caption{Errors in both global and local norms on the cube. (Example~\ref{example-3D})}
    \label{tab::cube-curve}
    \resizebox{\linewidth}{!}{
      \begin{tabular}{?c^cc|cc|cc|cc|cc?}
        \hhrule
        \multicolumn{11}{?c?}{\textbf{Linear FEM ($\bm{k=1}$)}}\\
        \shrule
        $N_{\text{ref}}$ & $\|\cdot\|_{L^2(\Omega)}$ & Rate
        & $\|\cdot\|_{L^2(\Omega \setminus B_{1})}$ & Rate
        & $\|\cdot\|_{L^2(\Omega \setminus B_{2})}$ & Rate
        & $|\cdot|_{H^1(\Omega \setminus B_1)}$ & Rate
        & $|\cdot|_{H^1(\Omega \setminus B_2)}$ & Rate \\
        \hline
        0 & 1.21e-01 & --- & 6.05e-02 & --- & 4.01e-02 & --- & 1.53e+00 & --- & 1.43e+00 & --- \\
        1 & 7.13e-02 & 0.76 & 2.59e-02 & 1.23 & 1.82e-02 & 1.14 & 7.51e-01 & 1.03 & 6.56e-01 & 1.13 \\
        2 & 3.68e-02 & 0.95 & 8.38e-03 & 1.62 & 4.91e-03 & 1.89 & 3.74e-01 & 1.01 & 2.22e-01 & 1.56 \\
        3 & 1.40e-02 & 1.40 & 1.90e-03 & 2.14 & 1.14e-03 & 2.10 & 1.36e-01 & 1.46 & 8.59e-02 & 1.37 \\
        4 & 4.93e-03 & 1.50 & 4.07e-04 & 2.22 & 2.44e-04 & 2.23 & 5.53e-02 & 1.30 & 3.56e-02 & 1.27 \\
        \hhrule
        \multicolumn{11}{?c?}{\textbf{Quadratic FEM ($\bm{k=2}$)}}\\
        \shrule
        $N_{\text{ref}}$ & $\|\cdot\|_{L^2(\Omega)}$ & Rate
        & $\|\cdot\|_{L^2(\Omega \setminus B_{1})}$ & Rate
        & $\|\cdot\|_{L^2(\Omega \setminus B_{2})}$ & Rate
        & $|\cdot|_{H^1(\Omega \setminus B_1)}$ & Rate
        & $|\cdot|_{H^1(\Omega \setminus B_2)}$ & Rate \\
        \hline
        0 & 5.03e-02 & --- & 2.11e-02 & --- & 1.28e-02 & --- & 9.03e-01 & --- & 6.61e-01 & --- \\
        1 & 2.90e-02 & 0.79 & 4.77e-03 & 2.15 & 2.36e-03 & 2.44 & 3.31e-01 & 1.45 & 1.44e-01 & 2.19 \\
        2 & 1.35e-02 & 1.10 & 6.94e-04 & 2.78 & 2.68e-04 & 3.14 & 6.46e-02 & 2.36 & 2.35e-02 & 2.62 \\
        3 & 5.31e-03 & 1.35 & 7.71e-05 & 3.17 & 3.03e-05 & 3.14 & 1.22e-02 & 2.41 & 4.73e-03 & 2.31 \\
        4 & 2.11e-03 & 1.33 & 9.28e-06 & 3.05 & 3.71e-06 & 3.03 & 2.60e-03 & 2.22 & 1.08e-03 & 2.13 \\
        \hhrule
        \multicolumn{11}{?c?}{\textbf{Cubic FEM ($\bm{k=3}$)}}\\
        \shrule
        $N_{\text{ref}}$ & $\|\cdot\|_{L^2(\Omega)}$ & Rate
        & $\|\cdot\|_{L^2(\Omega \setminus B_{1})}$ & Rate
        & $\|\cdot\|_{L^2(\Omega \setminus B_{2})}$ & Rate
        & $|\cdot|_{H^1(\Omega \setminus B_1)}$ & Rate
        & $|\cdot|_{H^1(\Omega \setminus B_2)}$ & Rate \\
        \hline
        0 & 3.40e-02 & --- & 1.05e-02 & --- & 6.51e-03 & --- & 7.61e-01 & --- & 3.58e-01 & --- \\
        1 & 1.73e-02 & 0.98 & 2.07e-03 & 2.34 & 6.30e-04 & 3.37 & 2.41e-01 & 1.66 & 6.59e-02 & 2.44 \\
        2 & 7.75e-03 & 1.16 & 1.22e-04 & 4.09 & 2.48e-05 & 4.67 & 1.79e-02 & 3.75 & 3.17e-03 & 4.37 \\
        3 & 3.42e-03 & 1.18 & 7.46e-06 & 4.03 & 1.38e-06 & 4.17 & 1.67e-03 & 3.42 & 2.88e-04 & 3.46 \\
        4 & 1.37e-03 & 1.33 & 3.74e-07 & 4.32 & 8.28e-08 & 4.06 & 1.47e-04 & 3.51 & 3.25e-05 & 3.15 \\
        \hhrule
      \end{tabular}}
  \end{table}

\end{example}

% ----------------------------------------------------------------
\section{Conclusion}
\label{sect-Conclusion}

We have shown that standard finite element methods admit optimal local error
estimates for elliptic problems with measure-valued sources of compact support.
The framework of very weak solutions and the corresponding numerical schemes
plays a crucial role in the analysis of such problems. Our results demonstrate
that the loss of global convergence is purely local in nature. Possible
extensions of this work include adaptive finite element methods and
time-dependent problems with measure data.

% ----------------------------------------------------------------

\section*{Declarations}

\noindent {\bf The Conflict of Interest Statement:}
No conflict of interest exists. \vskip 0.1in

\noindent {\bf Availability of data and material:}
The code to reproduce the numerical results presented in this paper is available
at \url{https://github.com/bombeuler/DiracSource}. \vskip 0.1in

\noindent {\bf Funding:}
The work of the first author was supported in part by the National Key Research
and Development Program of China (No.2023YFC3804500) and National Science
Foundation of China No.12231003. \vskip 0.1in

% ----------------------------------------------------------------

\end{document}